\theoremstyle{plain}
\newtheorem{thm}{\protect\theoremname}
\theoremstyle{plain}
\newtheorem{lem}[thm]{\protect\lemmaname}
\newcommand{\dx}{\,\mathrm{d}}
\providecommand{\lemmaname}{Lemma}
\providecommand{\theoremname}{Theorem}
\begin{document}

\title{A Ces\`aro average for an additive problem with an arbitrary number of prime powers and squares}

\author{Marco Cantarini, Alessandro Gambini, Alessandro Zaccagnini}
\begin{abstract}
In this paper we extend and improve all the previous results known in literature about weighted average, with Ces\`aro weight, of
representations of an integer as sum of a positive arbitrary number of prime powers and a non-negative arbitrary number of squares. Our result includes all cases dealt with so far and allows us to obtain the best possible outcome using the chosen technique.
\end{abstract}

\maketitle
\emph{\today}\\

\subjclass 2010 Mathematics Subject Classification: {Primary 11P32; Secondary 44A10, 33C10}

\keywords{Keywords and phrases: Goldbach-type theorems, Laplace transforms, Bessel functions, Ces\`aro average.}

\section{introduction}

The study of counting the number of the possible representation of a positive integer as sum of primes, prime powers and, in general,
sum of elements that belong to some fixed subset of $\mathbb{N}$ is classical in number theory and it has been highly developed in
recent years. Probably, the most popular problem in this context is the ternary and binary Goldbach conjecture, which states that every
odd number greater than $3$ is sum of three primes and every even number greater than $2$ is the sum of two primes, respectively. While
the ternary conjecture has been partially solved by Vinogradov \cite{Vin} and then definitively solved by Helfgott in a series of papers \cite{Helf3,Helf1,Helf2}, the binary conjecture is still open; only partial results were obtained
and, often, only conditional on the Riemann hypothesis (see, e.g.
the historical account about the Goldbach binary problem \cite{BhoHal}).
Given the difficulty of the problem, it was decided to tackle simplified
versions of it, such as, for example, considering the number
of representations on average and with suitable weights.  In this context
are inserted the works related to the study of averages, with Ces\`aro
weight, of functions that count the number representations of an integer
as the sum of elements that are in some fixed subset of $\mathbb{N}$. Similar averages of arithmetical 
functions are common in the literature: see, e.g., \cite{Ber}.
This approach was used in \cite{LanZac} for the binary Goldbach
problem: see Languasco's paper \cite{Languasco2016} for a thorough
introduction. The presence of a smooth weight allowed to obtain an asymptotic
formula with terms of decreasing orders of magnitude and depending
on the non-trivial zeros of the Riemann Zeta function; furthermore,
the weights allowed to obtain results independent of the Riemann hypothesis.
Since the Ces\`aro weights depend on a non-negative real parameter
$k$ and are equal to $1$ if $k=0$ (that is, for $k=0$ we have
a simple average without weights) it is important to obtain results
with the smallest possible $k$. During the last few years there have
been some improvements regarding the optimal $k$ in the case of the
Goldbach's problem: in \cite{LanZac} results hold for $k>1$, in
\cite{GolYan} (assuming Riemann hypothesis) and in \cite{Can1} (unconditionally)
for $k=1$ and in \cite{BruKacPer} for $k>0$. Unfortunately, for
case $k=0$, that is, without the Ces\`aro weights, it is not yet
possible to obtain in the same form or with the same quantity of terms
as in the other cases (see, for example, \cite{LanZac2},\cite{Can2}
and \cite{Pin}). Given the flexibility of the technique introduced
in \cite{LanZac}, the latter has been applied to other types of additive
problems (see \cite{Can3}\cite{Can5}\cite{LanZac3}\cite{LanZac4}\cite{LanZac5}).
In this paper, we prove a result which incorporates all the previous
results in the case of Ces\`aro averages and we show how the technique,
although very general and applicable to many problems, makes the lower
bound of $k$ worse as the number of primes and squares involved increases,
so as to confirm what has already been suggested by the lower
bound for $k$ obtained in \cite{Can3} and \cite{Can5}. 

\section{preliminary definitions and main theorem}

Let $d,h,N\in\mathbb{N},\,d>0$, $N\geq2$, $\mathbf{m}=\left(m_{1},\dots,m_{d}\right)\in\mathbb{N}^{d},\,\mathbf{r}:=\left(r_{1},\dots,r_{d}\right)\in\left(\mathbb{\mathbb{N}}^{+}\right)^{d},$ where $1\le r_1\le r_2\le \cdots\le r_d$,  $\mathbf{\mathbf{t}}:=\left(t_{1},\dots,t_{h}\right)\in\mathbb{N}^{h}$
and, in general, with bold letters, for example $\mathbf{f}$, we
will indicate some vector that belongs to $\mathbb{N}^{\alpha}$ or
$\left(\mathbb{N}^{+}\right)^{\alpha},$ for some positive integer
$\alpha.$ With the symbol $\left\Vert \cdot\right\Vert $ we will
indicate the usual Euclidean norm, with the symbol $\rho$, with or without subscripts, we will always indicate the non-trivial zeros of the Riemann zeta function and the series $\sum_{\rho}$ will always indicates the sum over all non trivial zeros of $\zeta(s)$, with or without subscripts. With $\bm{\rho}:=(\rho_{s_{1}},\dots,\rho_{s_{v}})$, where $s_{j},\, j=1,\dots,v$ belong to some subset of $\mathbb{N}^{+}$.

For every $\mathfrak{J}\subseteq\mathfrak{D}$ we will define the scalar product
\[
\tau\left(\bm{\Psi},\mathbf{r},\mathfrak{J}\right):=\sum_{j\in\mathfrak{J}}\frac{\Psi_{j}}{r_{j}}
\]
where $\bm{\Psi}=(\Psi_1,\ldots,\Psi_d)$. In most cases along the paper we will use $\bm{\Psi}=\bm{\rho}$; in addition, we will use the short definition $\tau\left(\bm{1},\mathbf{r},\mathfrak{J}\right)=\tau\left(\mathbf{r},\mathfrak{J}\right):=\sum_{j\in\mathfrak{J}}\frac{1}{r_{j}}$.

We will also indicate by $\sum_{\mathfrak{J}\subseteq\mathfrak{D}}$
the sums over all the possible subsets of of $\mathfrak{D}$. Taking
$n\in\mathbb{N}$, we set
\[
R_{d,h,\mathbf{r}}\left(n\right):=\sum_{m_{1}^{r_{1}}+\dots+m_{d}^{r_{d}}+t_{1}^{2}+\dots+t_{h}^{2}=n}\Lambda\left(m_{1}\right)\cdots\Lambda\left(m_{h}\right)
\]
where $\Lambda\left(m\right)$ is the usual von Mangoldt function.
We want to find an asymptotic formula, as $N\rightarrow+\infty$,
for 
\[
\sum_{n\leq N}R_{d,h,\mathbf{r}}\left(n\right)\frac{\left(N-n\right)^{k}}{\Gamma\left(k+1\right)}
\]
where $k>0$ is a real parameter and $\Gamma\left(x\right)$ is the
Euler Gamma function. 

Let $Z:=\left\{ s\in\mathbb{C},\,0\leq\mathrm{Re}\left(s\right)\leq1:\,\zeta\left(s\right)=0\right\}$  be the set of the non-trivial zeros of the Riemann Zeta function and let $\mathfrak{J}\subseteq\mathfrak{D}$. We will use the symbols 
$$\sum_{\bm{\rho}\in Z^{\left|\mathfrak{J}\right|}}=\sum_{\rho_{j_{1}}}\cdots\sum_{\rho_{j_{\left|\mathfrak{J}\right|}}}$$ 
and 
$$ \frac{1}{\textbf{r}}\Gamma\left(\frac{\bm{\rho}}{\textbf{r}}\right):=
\frac1{r_{j_1}}\Gamma\left(\frac{\rho_{j_{1}}}{r_{j_{1}}}\right)\cdots\frac1{r_{j_{_{\left|\mathfrak{J}\right|}}}}\Gamma\left(\frac{\rho_{j_{_{\left|\mathfrak{J}\right|}}}}{r_{j_{\left|\mathfrak{J}\right|}}}\right)$$ 
where $j_{\alpha}\in\mathfrak{J},\,\alpha=1,\dots,\left|\mathfrak{J}\right|$, every $\rho_{j_{\alpha}}\in Z$ and $r_{j_{\alpha}}$ is the $j_{\alpha}$-th coordinate of the fixed vector $\textbf{r}=\left(r_{1},\dots,r_{d}\right)$. 

In analogy to the previous definition, we will use the following symbol
$$ \frac{1}{\textbf{r}}\Gamma\left(\frac{1}{\textbf{r}}\right):=
\frac1{r_1}\Gamma\left(\frac{1}{r_{1}}\right)\cdots\frac1{r_{d}}\Gamma\left(\frac{1}{r_{d}}\right).$$ 

We introduce the following abbreviation for
the terms of the development:
\begin{align*}
M_{1}\left(N,k,d,h,\mathbf{r}\right)  :=&
\frac{1}{2^{h}}\sum_{\ell=0}^{h}\dbinom{h}{\ell}\frac{\pi^{\frac{\ell}{2}}\left(-1\right)^{h-\ell}N^{k+\tau\left(\mathbf{r},\mathfrak{D}\right)+\frac{\ell}{2}}}{\Gamma\left(k+1+\tau\left(\mathbf{r},\mathfrak{D}\right)+\frac{\ell}{2}\right)}
\frac{1}{\textbf{r}}\Gamma\left(\frac{1}{\textbf{r}}\right),
\\
 M_2\left(N,k,d,h,\mathbf{r}\right)  :=&
 \frac{N^{\frac{k+\tau\left(\mathbf{r},\mathfrak{D}\right)}{2}}}{\pi^{k+\tau\left(\mathbf{r},\mathfrak{D}\right)}}\sum_{\eta=0}^{h-1}\frac{\dbinom{h}{\eta}}{2^{\eta}}\sum_{\ell=0}^{\eta}\dbinom{\eta}{\ell}\left(-1\right)^{\eta-\ell}
\mathfrak{B}\big(\tau\left(\mathbf{r},\mathfrak{D}\right)\big)
\frac{1}{\textbf{r}}\Gamma\left(\frac{1}{\textbf{r}}\right),
\\
M_3\left(N,k,d,h,\mathbf{r}\right)  :=&
\frac{N^{k}\left(-1\right)^{d}}{2^{h}}\sum_{\ell=0}^{h}\dbinom{h}{\ell}\left(N\pi\right)^{\frac{\ell}{2}}\left(-1\right)^{h-\ell}\sum_{\bm{\rho}\in Z^{d}}
\frac1{\textbf{r}}\Gamma\left(\frac{\bm{\rho}}{\textbf{r}}\right)
\frac{N^{\tau\left(\bm{\rho},\mathbf{r},\mathfrak{D}\right)}}{\Gamma\left(k+1+\frac{\ell}{2}+\tau\left(\bm{\rho},\mathbf{r},\mathfrak{D}\right)\right)}.
\\
M_4\left(N,k,d,h,\mathbf{r}\right)  :=&
\frac{N^{k/2}\left(-1\right)^{d}}{\pi^{k}}\sum_{\eta=0}^{h-1}\frac{\dbinom{h}{\eta}}{2^{\eta}}\sum_{\ell=0}^{\eta}\dbinom{\eta}{\ell}\left(-1\right)^{\eta-\ell}
\sum_{\bm{\rho}\in Z^{d}}
\frac1{\textbf{r}}\Gamma\left(\frac{\bm{\rho}}{\textbf{r}}\right)
\frac{N^{\tau\left(\bm{\rho},\mathbf{r},\mathfrak{D}\right)/2}}{\pi^{\tau\left(\bm{\rho},\mathbf{r},\mathfrak{D}\right)}}
\mathfrak{B}\big(\tau\left(\bm{\rho},\mathbf{r},\mathfrak{D}\right)\big),
\\
M_5\left(N,k,d,h,\mathbf{r}\right)  :=&
\frac{N^{k/2}}{\pi^{k}}\sum_{\underset{{\scriptstyle \left|I\right|\geq1}}{I\subseteq\mathfrak{D}}}N^{\frac{\tau\left(\mathbf{r},I\right)}{2}}\left(-1\right)^{\left|\mathfrak{D}\setminus I\right|}\sum_{\eta=0}^{h}\frac{\dbinom{h}{\eta}}{2^{\eta}}\sum_{\ell=0}^{\eta}\dbinom{\eta}{\ell}\left(-1\right)^{\eta-\ell} 
\sum_{\bm{\rho}\in Z^{|\mathfrak{D}\setminus I|}}
\frac1{\textbf{r}}\Gamma\left(\frac{\bm{\rho}}{\textbf{r}}\right)
\frac{N^{\tau\left(\bm{\rho},\mathbf{r},\mathfrak{D}\setminus I\right)/2}}{\pi^{\tau\left(\bm{\rho},\mathbf{r},\mathfrak{D}\setminus I\right)}}
\\
&\times
\sum_{\bm{\rho}\in Z^{|\mathfrak{D}\setminus I|}}
\frac1{\textbf{r}}\Gamma\left(\frac{\bm{\rho}}{\textbf{r}}\right)
\frac{N^{\tau\left(\bm{\rho},\mathbf{r},\mathfrak{D}\setminus I\right)/2}}{\pi^{\tau\left(\bm{\rho},\mathbf{r},\mathfrak{D}\setminus I\right)}}
\mathfrak{B}\big(\tau\left(\mathbf{r},I\right)+\tau\left(\bm{\rho},\mathbf{r},\mathfrak{D}\setminus I\right)\big),
\end{align*}
where $\rho_{j}$ runs over the non-trivial zeros of the Riemann Zeta
function and $J_{v}\left(u\right)$ are the Bessel $J$ function of
real argument $u$ and complex order $v$ and 
\begin{align*}
\mathfrak{B}(x)
=
\mathfrak{B}_{k,h,\eta,\ell,N}(x)
=
N^{\frac{h-\eta+\ell}{4}}
\sum_{\mathbf{f}\in\left(\mathbb{N}^{+}\right)^{h-\eta}}\frac{J_{x+k+(h-\eta+\ell)/2}\left(2\pi\sqrt{N}\left\Vert \mathbf{f}\right\Vert \right)}{\left\Vert \mathbf{f}\right\Vert ^{x+k+(h-\eta+\ell)/2}}, 
\end{align*}
with
\begin{align*}
\sum_{\mathbf{f}\in\left(\mathbb{N}^{+}\right)^{c}}:=\sum_{f_{1}\geq1}\cdots\sum_{f_{c}\geq1}.
\end{align*}

The convergence of the mentioned series will be proved in the section \S \ref{conv}. The main result of this
article is the following theorem:
\begin{thm}\label{main_thm}
Let $d,h\in\mathbb{N},\,d>0,$ let $N$ be a sufficiently large integer.
Let $\mathfrak{D}:=\left\{ 1,\dots,d\right\} $ and, for every $\mathfrak{J}\subseteq\mathfrak{D}$
(or with the notation $I\subseteq\mathfrak{D})$ let $\tau\left(\mathbf{r},\mathfrak{J}\right):=\sum_{j\in\mathfrak{J}}\frac{1}{r_{j}}$, where $1\le r_1\le r_2\le \cdots\le r_d$.
Then, for $k>\frac{d+h}{2}$, we have that 
\[
\sum_{n\leq N}R_{d,h,\mathbf{r}}\left(n\right)\frac{\left(N-n\right)^{k}}{\Gamma\left(k+1\right)}=
\sum_{j=1}^{5}M_{j}\left(N,k,d,h,\mathbf{r}\right)+
O_{\mathbf{r},d,h}\left(N^{k+h/2+\tau\left(\mathbf{r},\mathfrak{D}\right)-1/r_d}\right).
\]
\end{thm}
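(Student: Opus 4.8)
The plan is to compute the Cesàro average via its Laplace-transform representation and then extract main terms from the singularities and estimate the tails. Concretely, recall the classical identity
\[
\frac{1}{\Gamma(k+1)}\sum_{n\le N}R_{d,h,\mathbf r}(n)(N-n)^{k}
=\frac{1}{2\pi i}\int_{(a)}e^{Nz}\,z^{-k-1}\,\widetilde{S}(z)\dx z,
\]
where, for $\mathrm{Re}(z)>0$,
\[
\widetilde{S}(z)=\Bigl(\sum_{m\ge1}\Lambda(m)e^{-m^{r_1}z}\Bigr)\cdots\Bigl(\sum_{m\ge1}\Lambda(m)e^{-m^{r_d}z}\Bigr)\Bigl(\sum_{t\ge1}e^{-t^{2}z}\Bigr)^{h}.
\]
First I would record the two building-block Laplace transforms: the standard formula for $\sum_{m}\Lambda(m)e^{-m^{r}z}$ coming from the explicit formula for $\psi$, which produces a main term $\frac1r\Gamma(1/r)z^{-1/r}$, a sum over the non-trivial zeros contributing $\frac1r\Gamma(\rho/r)z^{-\rho/r}$, and lower-order pieces; and the classical theta-type expansion $\sum_{t\ge1}e^{-t^{2}z}=\tfrac12(\sqrt{\pi/z}-1)+(\text{exponentially small in }1/z\text{-type Bessel terms})$, or rather its exact Poisson-summation form which, when raised to the $h$-th power and multiplied out, yields the powers of $z$ with the binomial coefficients $\binom{h}{\ell}2^{-h}$, $\binom{h}{\eta}2^{-\eta}$ appearing in $M_1,\dots,M_5$ and the Bessel-function series $\mathfrak B(x)$ after one applies the inversion $\frac{1}{2\pi i}\int_{(a)}e^{Nz}z^{-s}J_{\nu}(c\sqrt z\,)\,\text{-type}$ integrals (which is exactly where the $J$-Bessel functions of complex order enter).

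Next I would substitute these expansions into the product $\widetilde S(z)$ and expand everything by multilinearity: each of the $d$ prime-power factors contributes either its main term, a zero-sum term, or an error term, and each of the $h$ square-factors contributes either the $\sqrt{\pi/z}$ part, the constant part, or a Bessel/theta remainder. This produces a large but finite combinatorial sum indexed by which factors are in the ``main'' versus ``zero'' versus ``error'' state — this is precisely the role of the sums $\sum_{\mathfrak J\subseteq\mathfrak D}$ and $\sum_{I\subseteq\mathfrak D}$ in the statement. I would then integrate term-by-term against $e^{Nz}z^{-k-1}$, using the Hankel-type formula $\frac{1}{2\pi i}\int_{(a)}e^{Nz}z^{-w}\dx z=N^{w-1}/\Gamma(w)$ for the purely-power terms (this yields the $N$-powers and $\Gamma$-factors in $M_1$ and $M_3$) and the corresponding Bessel inversion formula for the terms carrying a $J_\nu(2\pi\sqrt N\|\mathbf f\|)$ (yielding $M_2$, $M_4$, $M_5$ and the definition of $\mathfrak B$). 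Collecting the terms in which \emph{no} prime-power factor is in the error state and \emph{no} square factor is in the error state gives exactly $M_1+\cdots+M_5$; everything else is absorbed into the error term.

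The main obstacle — and the only genuinely delicate point — is controlling the error terms uniformly and showing they are $O_{\mathbf r,d,h}(N^{k+h/2+\tau(\mathbf r,\mathfrak D)-1/r_d})$, together with justifying the term-by-term integration and the absolute convergence of the multiple series over zeros and over $\mathbf f\in(\mathbb N^+)^{h-\eta}$. For the tail estimates one shifts the contour to $\mathrm{Re}(z)=1/N$ and uses the standard bounds $\bigl|\sum_m\Lambda(m)e^{-m^{r}z}\bigr|\ll |z|^{-1/r}$ on such a line together with $\bigl|\sum_t e^{-t^2z}\bigr|\ll |z|^{-1/2}$; the hypothesis $k>\frac{d+h}{2}$ is exactly what is needed to make $|z|^{-k-1}\cdot|z|^{-\tau(\mathbf r,\mathfrak D)}\cdot|z|^{-h/2}$ integrable against $|e^{Nz}|$ after the substitution $z=1/N+iy$, and the loss of one unit in the exponent (the $-1/r_d$, coming from the smallest available $1/r_j$) reflects replacing one main/zero term by its true error. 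Convergence of the Bessel series $\mathfrak B(x)$ follows from the asymptotic $J_\nu(u)\ll u^{-1/2}$ for large real $u$, which makes the summand decay like $\|\mathbf f\|^{-x-k-(h-\eta+\ell)/2-1/2}$, convergent in $h-\eta$ variables precisely under $k>\frac{d+h}{2}$; the convergence of the sums over non-trivial zeros is handled via $\sum_\rho |\Gamma(\rho/r)|\,|\rho|^{-\sigma}<\infty$ using Stirling and the zero-density bound $N(T)\ll T\log T$. These convergence verifications are deferred to the section the excerpt calls \S\ref{conv}, so here I would only need the qualitative bounds to push the argument through.
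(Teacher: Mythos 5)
Your overall strategy matches the paper's: the Laplace-transform representation (\ref{eq:main}), the explicit formula for $\widetilde S_r(z)$ splitting each prime-power factor into a main power, a zero-sum, and an error, the theta-function functional equation (\ref{eq:funceqomega}) expanded binomially, and the Bessel integral representation (\ref{eq:bessel}) to convert the shifted transforms into $J$-functions. The combinatorial bookkeeping you describe (which factors are in ``main'' vs.\ ``zero'' vs.\ ``error'' state) is precisely the paper's $A_1+A_2+A_3$ decomposition followed by $A_1 = I_1+I_2+I_3$, and the observation that the surviving $1/r_d$ in the error exponent is the smallest available $1/r_j$ is correct.

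However, your account of where the crucial hypothesis $k>\frac{d+h}{2}$ enters is wrong, and this is not a cosmetic issue — it is the actual content of the theorem. You assert that $k>\frac{d+h}{2}$ is ``exactly what is needed to make $|z|^{-k-1}|z|^{-\tau(\mathbf r,\mathfrak D)}|z|^{-h/2}$ integrable against $|e^{Nz}|$''; but on the line $\mathrm{Re}(z)=1/N$ this product is integrable whenever $k+\tau(\mathbf r,\mathfrak D)+h/2>0$, i.e.\ for all $k>0$, so that step is never the bottleneck. You also claim the Bessel series $\mathfrak B(x)$ converges ``precisely under $k>\frac{d+h}{2}$''; in fact $J_\nu(u)\ll u^{-1/2}$ makes the $\mathbf f$-sum over $(\mathbb N^+)^{h-\eta}$ converge already for $k>\frac{h-\eta}{2}-\frac12-\frac{\ell}{2}$, which in the worst case $(\eta=\ell=0)$ is only $k>\frac{h-1}{2}$ — considerably weaker. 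The genuine source of the constraint is the justification of exchanging the $d$-fold sum over non-trivial zeros, the $\mathbf f$-sum, and the contour integral in the cross term $I_{2,2}$: after Stirling and the substitution $v=\tau(\bm\gamma,\mathbf r,\mathfrak J_\lambda)/(Ny)$ one is left with exactly the multiple series of Lemma~\ref{lem:generalizlemnew}, and the arithmetic-geometric-mean step in (\ref{eq:sumgammageneralized}) together with the zero-counting asymptotic $\gamma(k)\sim 2\pi k/\log k$ forces $k>\frac{\lambda}{2}+\frac{h-\eta-\ell+\mu}{2}$, whose worst case over $\lambda\le d$, $\mu\le d-\lambda$, $\eta,\ell$ is precisely $k>\frac{d+h}{2}$. (The pure error term $A_3$ only requires $k>\frac d2$, and $I_{2,1}$ also only needs $k>\frac d2$.) Since you defer the convergence checks but also misstate what they would show, you have not actually explained why the stated threshold is correct; supplying the analogue of Lemma~\ref{lem:generalized1} and Lemma~\ref{lem:generalizlemnew} is the indispensable part you are missing.
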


It is important to underline that in some particular configurations of the parameters some terms of the asymptotic (but not the dominant term) could be incorporated in the error. Despite the apparently complicated form of the terms, it is not difficult to recognize
the results obtained in the previous work on this topic, for example
setting $d=2$, $h=0$ and $\mathbf{r}=\left(1,1\right)$ (the Goldbach
numbers case \cite{LanZac}) or $\mathbf{r}=\left(\ell_{1},\ell_{2}\right),\,1\leq\ell_{1}\leq\ell_{2}$
integers (the generalized Goldbach numbers case \cite{LanZac4}).
Furthermore, it is quite natural to conjecture that at least the main
term of this asymptotic is valid for $k\geq0$ instead of $k>\frac{d+h}{2}$
as suggested by similar studies but with other techniques (see, e.g.,
\cite{CanGamZac2}\cite{CanGamLangZac}). Now, let's briefly explain
the major ideas behind this theorem; one of the main tools of this
technique is the formula, due to Laplace \cite{Lap}, namely 
\begin{equation}
\frac{1}{2\pi i}\int_{\left(a\right)}v^{-s}e^{v}\dx v=\frac{1}{\Gamma\left(s\right)}\label{eq:onemaintool}
\end{equation}
 for $\mathrm{Re}\left(s\right)>0$ and $a>0$ (see formula $5.4(1)$
on page $238$ of \cite{ErdMagObeTric}), where
\[
\int_{\left(a\right)}:=\int_{a-i\infty}^{a+i\infty}.
\]

From (\ref{eq:onemaintool}) and suitable hypotheses, which we will
explain in detail in the next sections, we are able to write
\begin{equation}
\sum_{n\leq N}R_{d,h,\mathbf{r}}\left(n\right)\frac{\left(N-n\right)^{k}}{\Gamma\left(k+1\right)}=\frac{1}{2\pi i}\int_{\left(a\right)}e^{Nz}z^{-k-1}\widetilde{S}_{r_{1}}\left(z\right)\cdots\widetilde{S}_{r_{d}}\left(z\right)\omega_{2}\left(z\right)^{h}\dx z\label{eq:main}
\end{equation}
where $z=a+iy$, $a>0$, $y\in\mathbb{R}$, where
\begin{equation}
\widetilde{S}_{r}\left(z\right):=\sum_{m\geq1}\Lambda\left(m\right)e^{-m^{r}z},\qquad\omega_{2}\left(z\right):=\sum_{m\geq1}e^{-m^{2}z},\label{eq:Stildaeomega}
\end{equation}
are the series that embody the prime powers and the squares, respectively.
Since, as we will see, it is possible to develop $\widetilde{S}_{r}\left(z\right)$
as an asymptotic formula, the idea is to substitute this formula for
$\widetilde{S}_{r}\left(z\right)$, exchange the integral with all
the terms which are obtained from the various products and and finally
calculate the error. Another important aspect to emphasize is that
we work with squares, and so with $\omega_{2}\left(z\right)$, because
this function is linked to the well-known Jacobi theta $3$ function
\[
\theta_{3}\left(z\right):=\sum_{m\in\mathbb{Z}}e^{-m^{2}z}=1+2\omega_{2}\left(z\right)
\]
and $\theta_{3}\left(z\right)$ satisfies the functional equation
\[
\theta_{3}\left(z\right)=\left(\frac{\pi}{z}\right)^{1/2}\theta_{3}\left(\frac{\pi^{2}}{z}\right),\,\mathrm{Re}\left(z\right)>0
\]
(see, for example, Proposition VI.$4.3$, page $340$, of \cite{FreBus})
which implies a functional equation for $\omega_{2}\left(z\right)$
\begin{equation}
\omega_{2}\left(z\right)=\frac{1}{2}\left(\frac{\pi}{z}\right)^{1/2}-\frac{1}{2}+\left(\frac{\pi}{z}\right)^{1/2}\omega_{2}\left(\frac{\pi^{2}}{z}\right).\label{eq:funceqomega}
\end{equation}
This is fundamental for the present technique, because this functional equation
allows us to find the terms involving the Bessel $J$ function and,
since we do not have a functional equation of this type for other
powers than squares, we can only deal with this particular case.

\section{Settings}

For our purposes, we need a general version of the formula (\ref{eq:onemaintool}),
so we recall the following relations:
\begin{equation}
\frac{1}{2\pi}\int_{\mathbb{R}}\frac{e^{iDu}}{\left(a+iu\right)^{s}}\dx u=\begin{cases}
\frac{D^{s-1}e^{-aD}}{\Gamma\left(s\right)}, & D>0\\
0, & D<0
\end{cases}\label{eq:gen1}
\end{equation}
with $\mathrm{Re}\left(s\right)>0$, $\mathrm{Re}\left(a\right)>0$
and 
\begin{equation}
\frac{1}{2\pi}\int_{\mathbb{R}}\frac{1}{\left(a+iu\right)^{s}}\dx u=\begin{cases}
0, & \mathrm{Re}\left(s\right)>1\\
1/2, & \mathrm{Re}\left(s\right)=1
\end{cases}\label{eq:gen2}
\end{equation}
with $\mathrm{Re}\left(a\right)>0$ (see formulas $\left(8\right)$
and $\left(9\right)$ of \cite{Aze}). We also need an integral representation
of the Bessel $J$ function with real argument $u$ and complex order
$v$:
\begin{equation}
J_{v}\left(u\right):=\frac{\left(u/2\right)}{2\pi i}\int_{\left(a\right)}s^{-v-1}e^{s}e^{-u^{2}/\left(4s\right)}\mathrm{d} s\label{eq:bessel}
\end{equation}
for $a>0$, $u,v\in\mathbb{C}$ with $\mathrm{Re}\left(v\right)>-1$
(see, e.g., equation $\left(8\right)$ on page $177$ of \cite{Wat}).

Assume that $k>0$. From the definition of $\widetilde{S}_{r}\left(z\right)$
and $\omega_{2}\left(z\right)$ (\ref{eq:Stildaeomega}), it is not
difficult to note that 

\[
\widetilde{S}_{r_{1}}\left(z\right)\cdots\widetilde{S}_{r_{d}}\left(z\right)\omega_{2}\left(z\right)^{h}=\sum_{n\geq1}R_{d,h,\mathbf{r}}\left(n\right)e^{-nz}.
\]
 Furthermore, from (\ref{eq:gen1}) and (\ref{eq:gen2}), we have
that 
\begin{equation}
\sum_{n\leq N}R_{d,h,\mathbf{r}}\left(n\right)\frac{\left(N-n\right)^{k}}{\Gamma\left(k+1\right)}=\sum_{n\geq1}R_{d,h,\mathbf{r}}\left(n\right)\left(\frac{1}{2\pi i}\int_{\left(a\right)}e^{\left(N-n\right)z}z^{-k-1}\dx z\right).\label{eq:applicazgen}
\end{equation}
Now we want to show that it is possible to exchange the integral with
the series in the right side of (\ref{eq:applicazgen}).

By the Prime Number Theorem, we have that 
\begin{equation}
\widetilde{S}_{r_{j}}\left(a\right)\sim\frac{\Gamma\left(\frac{1}{r_{j}}\right)}{r_{j}a^{1/r_{j}}}\label{eq:PNT}
\end{equation}
as $a\rightarrow0^{+}$(see \cite{LanZac5}) and
\begin{equation}
\left|\omega_{2}\left(z\right)\right|\leq\omega_{2}\left(a\right)\leq\int_{0}^{+\infty}e^{-au^{2}}\dx u\leq a^{-1/2}\int_{0}^{+\infty}e^{-v^{2}}\dx v\ll a^{-1/2}\label{eq:omegaest}
\end{equation}
and so
\[
\sum_{n\geq1}\left|R_{d,h,\mathbf{r}}\left(n\right)e^{-nz}\right|=\sum_{n\geq1}R_{d,h,\mathbf{r}}\left(n\right)e^{-na}=\widetilde{S}_{r_{1}}\left(a\right)\cdots\widetilde{S}_{r_{d}}\left(a\right)\omega_{2}\left(a\right)^{h}
\]
\[
\ll_{\mathbf{r},d,h}a^{-\tau\left(\mathbf{r},\mathfrak{D}\right)-h/2}.
\]
From the trivial estimate
\begin{equation}
\left|e^{Nz}\right|\left|z^{-k-1}\right|\asymp e^{Na}\begin{cases}
a^{-k-1}, & \left|y\right|\leq a\\
\left|y\right|^{-k-1}, & \left|y\right|>a
\end{cases}\label{eq:trivial estimate}
\end{equation}
where $f\asymp g$ means $g\ll f\text{\ensuremath{\ll g}, we have}$
\begin{align*}
\frac{1}{2\pi i}\int_{\left(a\right)}e^{Nz}z^{-k-1}\widetilde{S}_{r_{1}}\left(z\right)\cdots\widetilde{S}_{r_{d}}\left(z\right)\omega_{2}\left(z\right)^{h}\dx z & \ll_{\mathbf{r},d,h}e^{Na}a^{-\tau\left(\mathbf{r},\mathfrak{D}\right)-h/2}\left(\int_{-a}^{a}a^{-k-1}\dx y+\int_{a}^{+\infty}y^{-k-1}\dx y\right)\\
 & \ll_{\mathbf{r},d,h}e^{Na}a^{-\tau\left(\mathbf{r},\mathfrak{D}\right)-h/2-k}
\end{align*}
for $k>0$. Then, we can exchange the integral with the series and
so we obtain the main formula \eqref{eq:main}.

\section{Lemmas}\label{conv}

In this section we present some technical lemmas that will be useful
later and some basic facts in complex analysis. First, we recall that
if $z=a+iy,\,a>0$ and $w\in\mathbb{C}$, we have that
\[
z^{-w}=\left|z\right|^{-\mathrm{Re}\left(w\right)-i\mathrm{Im}\left(w\right)}\exp\left(\left(-i\mathrm{Re}\left(w\right)+\mathrm{Im}\left(w\right)\right)\arctan\left(\frac{y}{a}\right)\right)
\]
and so
\begin{equation}
\left|z^{-w}\right|=\left|z\right|^{-\mathrm{Re}\left(w\right)}\exp\left(\mathrm{Im}\left(w\right)\arctan\left(\frac{y}{a}\right)\right).\label{eq:complex power}
\end{equation}
We also recall the Stirling formula 
\begin{equation}
\left|\Gamma\left(x+iy\right)\right|\sim\sqrt{2\pi}e^{-\pi\left|y\right|/2}\left|y\right|^{x-1/2}\label{eq:Stirling}
\end{equation}
which holds uniformly for $x\in\left[x_{1},x_{2}\right]$, $x_{1},\,x_{2}$
fixed and $\left|y\right|\rightarrow+\infty$ (see, e.g., \cite{Tit},
section $4.42$).

Now we introduce the ``explicit formula'' of $\widetilde{S}_{r}\left(z\right),\,r\in\mathbb{N}^{+}$. 
\begin{lem}
(Lemma $1$ of \cite{LanZac4}) Let $r\geq1$ be an integer, let $z=a+iy,\,a>0,\,y\in\mathbb{R}$.
Let 
\begin{equation}
T\left(z,r\right):=\frac{\Gamma\left(\frac{1}{r}\right)}{rz^{1/r}}-\frac{1}{r}\sum_{\rho}z^{-\rho/r}\Gamma\left(\frac{\rho}{r}\right).\label{eq:asmipt2}
\end{equation}
Then
\begin{equation}
\widetilde{S}_{r}\left(z\right)=T\left(z,r\right)+E\left(a,y,r\right).\label{eq:asimpt}
\end{equation}
where
\begin{equation}
\left|E\left(a,y,r\right)\right|\ll_{r}1+\left|z\right|^{1/2}\begin{cases}
1, & \left|y\right|\leq a\\
1+\log^{2}\left(\frac{\left|y\right|}{a}\right), & \left|y\right|>a.
\end{cases}\label{eq:error estimate}
\end{equation}
\end{lem}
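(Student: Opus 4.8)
The plan is to pass from $\widetilde{S}_{r}(z)$ to a Mellin--Barnes integral involving $-\zeta'/\zeta$ and then to shift the line of integration to $\mathrm{Re}(s)=-1/2$, reading the three parts of $T(z,r)$ and the tail $E(a,y,r)$ off the residues crossed. First I would insert the Mellin inversion formula $e^{-x}=\frac{1}{2\pi i}\int_{(c)}\Gamma(s)x^{-s}\dx s$ (valid for $c>0$, $\mathrm{Re}(x)>0$) with $x=m^{r}z$ into \eqref{eq:Stildaeomega} and interchange the sum over $m$ with the integral; this is legitimate whenever $c>1/r$, since then $\sum_{m}\Lambda(m)m^{-rc}=-\zeta'/\zeta(rc)$ converges, $|\Gamma(c+it)|$ decays exponentially, and $\int_{(c)}|z^{-s}|\,|\dx s|<\infty$ for each fixed $z$. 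This produces
\[
\widetilde{S}_{r}(z)=\frac{1}{2\pi i}\int_{(c)}\Gamma(s)\,z^{-s}\Bigl(-\frac{\zeta'}{\zeta}(rs)\Bigr)\dx s,\qquad c:=2 .
\]

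The heart of the argument is to move this line to $\mathrm{Re}(s)=-1/2$ (indenting by a tiny circular arc around $s=-1/2$ in the exceptional case $r\equiv0\pmod4$, where $rs=-r/2$ is then a trivial zero of $\zeta$). Since the integrand has infinitely many poles at the points $s=\rho/r$, I would do this through truncated rectangles $[-1/2,c]\times[-T_{j},T_{j}]$, choosing the heights $T_{j}\in[j,j+1]$ so that $|T_{j}-\gamma|\gg1/\log j$ for every ordinate $\gamma$ of a zero of $\zeta$; for such heights the classical bound $|\zeta'/\zeta(w)|\ll\log^{2}(|\mathrm{Im}(w)|+2)$ holds on the horizontal sides (together with the functional equation where $\mathrm{Re}(w)$ is very negative). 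By the Stirling estimate \eqref{eq:Stirling} the $\Gamma$-factor on those sides is $\ll e^{-\pi T_{j}/2}T_{j}^{c}$, while $|z^{-s}|\ll_{z}e^{T_{j}|\arctan(y/a)|}$ with $|\arctan(y/a)|<\pi/2$ for fixed $z$, so the horizontal contributions decay exponentially in $T_{j}$ and vanish in the limit. Letting $j\to\infty$ one collects: the residue at $s=1/r$, where $-\zeta'/\zeta(rs)$ has a simple pole of residue $1/r$, giving the main term $\Gamma(1/r)/(rz^{1/r})$; the residues at $s=\rho/r$, each of residue $-1/r$ for $-\zeta'/\zeta(rs)$, giving $-\frac1r\sum_{\rho}z^{-\rho/r}\Gamma(\rho/r)$ (and this limiting procedure is precisely what yields the convergence, in the symmetric sense, of that series); the residue of $\Gamma(s)$ at $s=0$, giving the constant $-\zeta'/\zeta(0)$; and the residues at $s=-2k/r$ for the finitely many $k$ with $1\le k<r/4$ (simple, since there $rs=-2k$ is a trivial zero and $-2k/r\notin\mathbb{Z}$), giving $-\frac1r\Gamma(-2k/r)z^{2k/r}$, which are jointly $\ll_{r}1+|z|^{1/2}$. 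This establishes \eqref{eq:asimpt}, with $E(a,y,r)$ equal to $-\zeta'/\zeta(0)$, plus the finite sum of trivial-zero residues, plus $\frac{1}{2\pi i}\int_{(-1/2)}\Gamma(s)z^{-s}(-\zeta'/\zeta(rs))\dx s$ (plus an $O_{r}(|z|^{1/2})$ arc term when $r\equiv0\pmod4$).

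It then remains to bound $E(a,y,r)$. The constant and the finitely many trivial-zero residues are trivially $\ll_{r}1+|z|^{1/2}$, so only the integral over $\mathrm{Re}(s)=-1/2$ matters. On that line, by \eqref{eq:complex power}, $|z^{-s}|=|z|^{1/2}\exp\bigl(t\arctan(y/a)\bigr)$; by \eqref{eq:Stirling}, $|\Gamma(-1/2+it)|\ll e^{-\pi|t|/2}(1+|t|)^{-1}$; and, via the functional equation $\zeta(w)=\chi(w)\zeta(1-w)$, which gives $\zeta'/\zeta(w)=\chi'/\chi(w)-\zeta'/\zeta(1-w)$ with $\chi'/\chi$ a combination of digamma functions satisfying $|(\Gamma'/\Gamma)(\sigma+i\tau)|\ll\log(|\tau|+2)$ for $\sigma$ in a fixed bounded range, one has $|\zeta'/\zeta(-r/2+irt)|\ll_{r}\log(|t|+2)$. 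Writing $\mu:=\tfrac{\pi}{2}-|\arctan(y/a)|=\arctan(a/|y|)>0$, the integral is then
\[
\ll_{r}|z|^{1/2}\int_{\mathbb{R}}e^{-\mu|t|}(1+|t|)^{-1}\log(|t|+2)\dx t\ll_{r}|z|^{1/2}\bigl(1+\log^{2}(1/\mu)\bigr),
\]
the extra logarithm arising from $\int_{1}^{1/\mu}t^{-1}\log t\,\dx t\asymp\log^{2}(1/\mu)$. If $|y|\le a$ then $\mu\ge\pi/4$, so this is $\ll_{r}|z|^{1/2}$; if $|y|>a$ then $\mu\asymp a/|y|$, so $\log(1/\mu)\ll1+\log(|y|/a)$ and the bound is $\ll_{r}|z|^{1/2}(1+\log^{2}(|y|/a))$. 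Adding the three contributions yields exactly \eqref{eq:error estimate}.

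The step I expect to be the main obstacle is the contour shift in the second paragraph: making the crossing of the infinitely many poles $s=\rho/r$ genuinely rigorous. This is what forces the truncation at heights $T_{j}$ selected to avoid the ordinates of the zeros (so that $\zeta'/\zeta$ stays under control on the horizontal segments), and it requires verifying that those segments really do vanish --- which rests on the fact that, for each fixed $z$, the exponential decay of $\Gamma$ strictly dominates the growth $e^{t\arctan(y/a)}$ of $z^{-s}$ because $|\arctan(y/a)|<\pi/2$. The very same argument also delivers the convergence of $\sum_{\rho}z^{-\rho/r}\Gamma(\rho/r)$ in $T(z,r)$, so that \eqref{eq:asmipt2} is meaningful.
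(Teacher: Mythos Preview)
The paper does not supply its own proof of this lemma: it is quoted verbatim as Lemma~1 of \cite{LanZac4}, with only the cosmetic remark that the constant $-\log(2\pi)$ (which in your argument is the residue $-\zeta'/\zeta(0)$ at $s=0$) has been shifted from $T(z,r)$ into $E(a,y,r)$. Your Mellin-inversion-plus-contour-shift argument is the standard route to such explicit formulae and is essentially what the cited reference does; the details you give---the choice of heights $T_j$ avoiding zero ordinates, the finitely many trivial-zero residues for $1\le k<r/4$, the indentation at $s=-1/2$ when $4\mid r$, and the estimate of the vertical integral via $\mu=\arctan(a/|y|)$---are all correct.
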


Note that in Lemma $1$ of \cite{LanZac4} $T\left(z,r\right)$ is
defined as 
\[
T\left(z,r\right):=\frac{\Gamma\left(\frac{1}{r}\right)}{rz^{1/r}}-\frac{1}{r}\sum_{\rho}z^{-\rho/r}\Gamma\left(\frac{\rho}{r}\right)-\log\left(2\pi\right)
\]
but in our context, to make the main term combinatorically more tractable,
it is better to insert $\log\left(2\pi\right)$ in the error term
$E\left(a,y,r\right)$. Furthermore, from (\ref{eq:PNT}) and (\ref{eq:error estimate})
we immediately get the important estimate

\begin{equation}
\left|\sum_{\rho}z^{-\rho/r}\Gamma\left(\frac{\rho}{r}\right)\right|\ll_{r}a^{-1/r}+1+\left|z\right|^{1/2}\begin{cases}
1, & \left|y\right|\leq a\\
1+\log^{2}\left(\frac{\left|y\right|}{a}\right), & \left|y\right|>a
\end{cases}\label{eq:estserieszeros0}
\end{equation}
which can be rewritten, if $0<a<1$ and $r\ge1$, in the more compact
form
\begin{equation}
\left|\sum_{\rho}z^{-\rho/r}\Gamma\left(\frac{\rho}{r}\right)\right|\ll_{r}\begin{cases}
a^{-1/r}, & \left|y\right|\leq a\\
a^{-1/r}+\left|z\right|^{1/2}\log^{2}\left(\frac{2\left|y\right|}{a}\right), & \left|y\right|>a.
\end{cases}\label{eq:estserieszeros}
\end{equation}

\begin{lem}
\label{lem:generalized1}Let $\lambda\in\mathbb{N}^{+}$, $r_{1},\dots,r_{\lambda}\in\mathbb{N}^{+}$
and $\mathbf{r}:=\left(r_{1},\dots,r_{\lambda}\right)\in\left(\mathbb{N}^{+}\right)^{\lambda}$.
Let $\rho_{j}=\beta_{j}+i\gamma_{j},\,j\in\left\{ 1,\dots,\lambda\right\} $,
run over the non trivial zeros of Riemann Zeta function and $\alpha>1$
be a parameter. Then, for any fixed $b>1$ and $c\geq0$, the series
\[
\sum_{\rho_{1}:\,\gamma_{1}>0}\left(\frac{\gamma_{1}}{r_{1}}\right)^{\beta_{1}/r_{1}-1/2}\cdots\sum_{\rho_{\lambda}:\,\gamma_{\lambda}>0}\left(\frac{\gamma_{\lambda}}{r_{\lambda}}\right)^{\beta_{\lambda}/r_{\lambda}-1/2}\int_{1}^{+\infty}\log^{c}\left(bu\right)\exp\left(-\arctan\left(\frac{1}{u}\right)\tau\left(\bm{\gamma},\mathbf{r},\mathfrak{J}_{\lambda}\right)\right)\frac{\dx u}{u^{\alpha+\tau\left(\bm{\beta},\mathbf{r},\mathfrak{J}_{\lambda}\right)}}
\]
converges if $\alpha>\frac{\lambda}{2}+1$.
\end{lem}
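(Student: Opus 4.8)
The plan is to reduce the multiple series to a product of single sums over the non-trivial zeros, each of which is handled by the classical density estimate for the zeros of $\zeta(s)$, after disposing of the $u$-integral uniformly. First I would bound the exponential factor crudely: since $\arctan(1/u)\in(0,\pi/2]$ for $u\ge 1$ and $\tau(\bm{\gamma},\mathbf{r},\mathfrak{J}_\lambda)=\sum_{j}\gamma_j/r_j\ge 0$ once we restrict to $\gamma_j>0$, we have $\exp(-\arctan(1/u)\,\tau(\bm{\gamma},\mathbf{r},\mathfrak{J}_\lambda))\le 1$. This is lossy in the $\gamma_j$ but, as in \cite{LanZac4}, it is affordable because the Gamma-factor weights $(\gamma_j/r_j)^{\beta_j/r_j-1/2}$ already decay like $\gamma_j^{-1/2+o(1)}$ (recall $0\le\beta_j\le 1$, so $\beta_j/r_j-1/2\le 1/2$ and in fact $\le 0$ once $r_j\ge 2$; for $r_j=1$ one keeps the exponent $\le 1/2$). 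So after this step the integrand over $u$ is at most $\log^c(bu)\,u^{-\alpha-\tau(\bm{\beta},\mathbf{r},\mathfrak{J}_\lambda)}$.

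Next I would carry out the $u$-integral. Since $\tau(\bm{\beta},\mathbf{r},\mathfrak{J}_\lambda)=\sum_j \beta_j/r_j\ge 0$, we have $\alpha+\tau(\bm{\beta},\mathbf{r},\mathfrak{J}_\lambda)\ge\alpha>1$, so
\[
\int_1^{+\infty}\log^c(bu)\,\frac{\dx u}{u^{\alpha+\tau(\bm{\beta},\mathbf{r},\mathfrak{J}_\lambda)}}\ll_{b,c,\alpha}1
\]
uniformly in the $\beta_j$ (the logarithmic factor only inflates the constant, by a standard integration-by-parts / substitution $u=e^t$ argument, using that $\alpha-1>0$ is bounded away from $0$). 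Thus the whole expression is bounded by a constant times
\[
\prod_{j=1}^{\lambda}\ \sum_{\rho_j:\,\gamma_j>0}\Bigl(\frac{\gamma_j}{r_j}\Bigr)^{\beta_j/r_j-1/2},
\]
and it suffices to show each single factor converges — and this is exactly where the hypothesis must enter. But note a subtlety: the factors are \emph{not} genuinely decoupled because the $u$-integral tied them together before we bounded it; the clean way is to first apply $\exp(-\cdots)\le 1$ and $\int_1^\infty\log^c(bu)u^{-\alpha-\tau(\bm\beta)}\dx u\le \int_1^\infty\log^c(bu)u^{-\alpha}\dx u\ll 1$ (dropping $\tau(\bm\beta)\ge 0$ in the exponent), which legitimately separates the sum into the product above.

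Finally I would estimate the single sum $\sum_{\gamma>0}(\gamma/r)^{\beta/r-1/2}$. Group the zeros dyadically, $T\le\gamma<2T$; by the Riemann–von Mangoldt counting estimate $N(T+1)-N(T)\ll\log T$, a block $T\le\gamma<2T$ contributes $\ll T\log T$ zeros, each of size $\ll (T/r)^{\beta/r-1/2}\ll T^{\beta/r-1/2}$. Since $\beta\le 1$ and $r\ge 1$, $\beta/r-1/2\le 1/2$, so the per-block bound is $\ll T^{1/2}\cdot T\log T=T^{3/2}\log T$, which diverges — so the bare single sum does \emph{not} converge, and the decoupling above is too crude. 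This tells me the correct argument must retain the $u$-integral's decay: one keeps $\int_1^\infty u^{-\alpha-\tau(\bm\beta,\mathbf r,\mathfrak J_\lambda)}\dx u\asymp (\alpha-1+\tau(\bm\beta,\mathbf r,\mathfrak J_\lambda))^{-1}$ and, more importantly, does \emph{not} discard the exponential in $\bm\gamma$; instead one bounds $\exp(-\arctan(1/u)\tau(\bm\gamma,\mathbf r,\mathfrak J_\lambda))$ only for $u$ in a bounded range and uses the power decay $u^{-\tau(\bm\gamma,\mathbf r,\mathfrak J_\lambda)}$-type gain coming from large $u$, exactly as in the proof of Lemma~1 of \cite{LanZac4} and its multidimensional analogue. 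Concretely: split $\int_1^{+\infty}=\int_1^{2}+\sum_{k\ge1}\int_{2^k}^{2^{k+1}}$; on $[1,2]$ use $\arctan(1/u)\ge\arctan(1/2)>0$ to get genuine exponential decay $\exp(-c_0\sum_j\gamma_j/r_j)$ in each $\gamma_j$, which makes $\sum_{\gamma_j}(\gamma_j/r_j)^{\beta_j/r_j-1/2}e^{-c_0\gamma_j/r_j}$ converge trivially; on $[2^k,2^{k+1}]$ with $k\ge1$ bound $\log^c(bu)\ll_c k^c+1$ and $\exp(-\cdots)\le 1$, getting $\ll (k^c+1)2^{-k(\alpha-1)}\prod_j\sum_{\gamma_j<C}\!\!(\gamma_j/r_j)^{\beta_j/r_j-1/2}$ where the inner sums are now over $\gamma_j$ in the range relevant to $u\asymp 2^k$; here one must be more careful and use that the product of $\lambda$ such sums, each $\asymp T^{3/2+o(1)}$ up to height $T=2^k$, is dominated by $2^{-k(\alpha-1)}\cdot 2^{k\lambda(1/2+1)}=2^{-k(\alpha-1-3\lambda/2)}$, and this geometric series converges precisely when $\alpha-1-3\lambda/2$... which gives the wrong threshold.

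The right bookkeeping, and the step I expect to be the genuine obstacle, is therefore to track the interplay between the \emph{positive} exponent $\beta_j/r_j-1/2$ (which can be as large as $1/2$ when $\beta_j$ near $1$, but then contributes $\zeta'/\zeta$-pole behaviour) and the density of zeros near a fixed height, using the sharper bound $\sum_{0<\gamma\le T}1\ll T\log T$ together with $\beta_j\le 1/2$ for most zeros (or handling the $\beta_j>1/2$ zeros separately via $\rho\leftrightarrow 1-\bar\rho$ symmetry so that effectively $\beta_j/r_j-1/2\le 0$), so that each single sum $\sum_{\gamma_j>0}(\gamma_j/r_j)^{\beta_j/r_j-1/2-\varepsilon}$ converges for the \emph{shifted} exponent produced by borrowing a small power $u^{-\varepsilon}$ from the $u$-integral — legitimate since $\alpha>\lambda/2+1$ leaves room to write $u^{-\alpha-\tau(\bm\beta)}=u^{-(1+\delta)-\tau(\bm\beta)}\cdot u^{-(\alpha-1-\delta)}$ and distribute $u^{-(\alpha-1-\delta)}=\prod_j u^{-(\alpha-1-\delta)/\lambda}$ across the $\lambda$ zero-sums, each absorbing a factor $u^{-(\alpha-1-\delta)/\lambda}\le \gamma_j^{-(\alpha-1-\delta)/\lambda}$ (valid since $u\ge 1$ is irrelevant — one instead uses the $u$-dependence the other way: the zero-sum over $\gamma_j$ is truncated at $\gamma_j\asymp u$ by the exponential, giving effectively $\int_1^\infty u^{-\alpha-\tau(\bm\beta)}\bigl(\prod_j u^{\,\beta_j/r_j+1/2}\bigr)\dx u=\int_1^\infty u^{-\alpha+\lambda/2}\dx u$, which converges iff $\alpha>\lambda/2+1$ — this is the clean heuristic that pins down the threshold, and the proof just makes this rigorous by dyadic decomposition). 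I would write it up by performing the dyadic split, estimating each zero-sum up to height $\asymp 2^k$ by $\ll 2^{k(\beta_j/r_j+1/2)}\log(2^k)\ll 2^{k/2}\cdot 2^{k\beta_j/r_j}\cdot k$ (since $r_j\ge1$), multiplying the $\lambda$ of them, inserting the $u$-integral weight $\ll (k^c+1)2^{-k\alpha}2^{-k\tau(\bm\beta,\mathbf r,\mathfrak J_\lambda)}$ and noting the $2^{k\beta_j/r_j}$ exactly cancels the $2^{-k\tau(\bm\beta)}$ factor, leaving $\sum_k (k^{c+\lambda}+1)\,2^{k(\lambda/2-\alpha)}<\infty$ iff $\alpha>\lambda/2+1$ — wait, that gives $\lambda/2$, not $\lambda/2+1$; the extra $+1$ comes from the $u$-measure $\dx u$ on $[2^k,2^{k+1}]$ contributing a further $2^k$, i.e. $\int_{2^k}^{2^{k+1}}\dx u\asymp 2^k$, so the true exponent is $2^{k(\lambda/2+1-\alpha)}$ and convergence holds iff $\alpha>\lambda/2+1$, matching the statement.
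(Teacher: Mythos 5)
Your final argument --- dyadic decomposition of the $u$-integral, exponential truncation of each zero-sum at height $\asymp 2^{k}$, explicit counting of zeros via $N(T)\ll T\log T$, and the observation that $\prod_{j}2^{k\beta_{j}/r_{j}}$ cancels $2^{-k\tau(\bm\beta,\mathbf{r},\mathfrak{J}_{\lambda})}$ --- is correct and reaches the stated threshold $\alpha>\lambda/2+1$. The paper, however, takes a shorter route. It does not decompose dyadically: using $\arctan(1/u)\asymp 1/u$ for $u\ge1$ it integrates out $u$ in a single step, obtaining
\[
\int_{1}^{+\infty}\exp\left(-\arctan\left(\frac{1}{u}\right)\tau(\bm\gamma,\mathbf{r},\mathfrak{J}_{\lambda})\right)\frac{\dx u}{u^{\alpha+\tau(\bm\beta,\mathbf{r},\mathfrak{J}_{\lambda})}}\ll_{\alpha,\mathbf{r}}\tau(\bm\gamma,\mathbf{r},\mathfrak{J}_{\lambda})^{1-\alpha-\tau(\bm\beta,\mathbf{r},\mathfrak{J}_{\lambda})},
\]
then removes the $\beta_{j}$-dependence with the pointwise inequality
\[
\frac{\prod_{j}(\gamma_{j}/r_{j})^{\beta_{j}/r_{j}}}{\tau(\bm\gamma,\mathbf{r},\mathfrak{J}_{\lambda})^{\tau(\bm\beta,\mathbf{r},\mathfrak{J}_{\lambda})}}\le 1
\]
(valid since $\gamma_{j}/r_{j}\le\tau(\bm\gamma,\mathbf{r},\mathfrak{J}_{\lambda})$ and $\beta_{j}>0$), and finally decouples the multiple sum by the AM--GM inequality $\tau(\bm\gamma,\mathbf{r},\mathfrak{J}_{\lambda})\ge\lambda\prod_{j}(\gamma_{j}/r_{j})^{1/\lambda}$, leaving a product of single sums $\sum_{\gamma_{j}>0}(\gamma_{j}/r_{j})^{-1/2-(\alpha-1)/\lambda}$, each of which converges precisely for $1/2+(\alpha-1)/\lambda>1$, i.e.\ $\alpha>\lambda/2+1$. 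Your dyadic argument makes the origin of the threshold visible block by block, but it costs noticeably more bookkeeping; the substitution--inequality--AM--GM chain performs the same decoupling algebraically and is the version worth writing up. In any case you should delete the two false starts before doing so: dropping the exponential wholesale (which, as you notice yourself, makes each single zero-sum diverge), and the $T^{3/2}$ miscount leading to the spurious threshold $\alpha>1+3\lambda/2$ (which forgets the compensating factor $u^{-\tau(\bm\beta,\mathbf{r},\mathfrak{J}_{\lambda})}$). Only the corrected computation at the end constitutes a proof.
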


\begin{proof}
Following the proof of Lemma $2$ of \cite{LanZac4}, we can see that
\[
\int_{1}^{+\infty}\exp\left(-\arctan\left(\frac{1}{u}\right)\tau\left(\bm{\gamma},\mathbf{r},\mathfrak{J}_{\lambda}\right)\right)\frac{\dx u}{u^{\alpha+\tau\left(\bm{\beta},\mathbf{r},\mathfrak{J}_{\lambda}\right)}}
\ll_{\alpha,\mathbf{r}}\tau\left(\bm{\gamma},\mathbf{r},\mathfrak{J}_{\lambda}\right)^{1-\alpha-\tau\left(\bm{\beta},\mathbf{r},\mathfrak{J}_{\lambda}\right)}\int_{0}^{+\infty}e^{-w}w^{\alpha+\tau\left(\bm{\beta},\mathbf{r},\mathfrak{J}_{\lambda}\right)-2}\dx w
\]
and the integral converges since $0<\beta_{j}<1,\,j=1,\dots,\lambda$
and $\alpha>1.$ Hence, we have to consider
\[
\sum_{\rho_{1}:\,\gamma_{1}>0}\cdots\sum_{\rho_{\lambda}:\,\gamma_{\lambda}>0}\frac{\left(\frac{\gamma_{1}}{r_{1}}\right)^{\beta_{1}/r_{1}-1/2}\cdots\left(\frac{\gamma_{\lambda}}{r_{\lambda}}\right)^{\beta_{\lambda}/r_{\lambda}-1/2}}{\tau\left(\bm{\gamma},\mathbf{r},\mathfrak{J}_{\lambda}\right)^{\alpha+\tau\left(\bm{\beta},\mathbf{r},\mathfrak{J}_{\lambda}\right)-1}}.
\]
Now, it is not difficult to note that
\begin{equation}
\frac{\left(\frac{\gamma_{1}}{r_{1}}\right)^{\beta_{1}/r_{1}}\cdots\left(\frac{\gamma_{\lambda}}{r_{\lambda}}\right)^{\beta_{\lambda}/r_{\lambda}}}
{\tau\left(\bm{\gamma},\mathbf{r},\mathfrak{J}_{\lambda}\right)^{\tau\left(\bm{\beta},\mathbf{r},\mathfrak{J}_{\lambda}\right)}}\leq1\label{eq:stima semplice}
\end{equation}
so we analyze
\[
\sum_{\rho_{1}:\,\gamma_{1}>0}\cdots\sum_{\rho_{\lambda}:\,\gamma_{\lambda}>0}\frac{\left(\frac{\gamma_{1}}{r_{1}}\right)^{-1/2}\cdots\left(\frac{\gamma_{\lambda}}{r_{\lambda}}\right)^{-1/2}}
{\tau\left(\bm{\gamma},\mathbf{r},\mathfrak{J}_{\lambda}\right)^{\alpha-1}}
\le \sum_{\rho_{1}:\,\gamma_{1}>0}\left(\frac{\gamma_{1}}{r_{1}}\right)^{-\frac12-\frac{\alpha-1}{\lambda}}\cdots\sum_{\rho_{\lambda}:\,\gamma_{\lambda}>0}\left(\frac{\gamma_{\lambda}}{r_{\lambda}}\right)^{-\frac12-\frac{\alpha-1}{\lambda}}
\]
by the inequality of arithmetic and geometric means.
From the asymptotic formula of $N\left(T\right)$, where $N\left(T\right)$
is the number of non-trivial zeros of the Riemann zeta function with
imaginary part $0\leq\gamma\leq T$, it is not difficult to prove,
putting $\gamma\left(k\right)$ the imaginary part of the
$k$-th non-trivial zeros of $\zeta\left(s\right)$, that
\[
\gamma\left(k\right)\sim\frac{2\pi k}{\log\left(k\right)}
\]
as $k\rightarrow+\infty$.  So the series converges if $\alpha>\frac{\lambda}{2}+1.$
The treatment is similar for the case $c>0$.
\end{proof}
\begin{lem}
\label{lem:generalizlemnew}Let $N,\lambda,\alpha$ be positive integers,
let $h\in\mathbb{Q}^{+},$ let $\rho_{j}=\beta_{j}+i\gamma_{j},\,j\in\left\{ 1,\dots,\lambda\right\} $,
run over the non-trivial zeros of the Riemann Zeta function, $\left\Vert \cdot\right\Vert $
the Euclidean norm in $\mathbb{R}^{d},\,d\in\mathbb{N}^{+}$ and $k>0$
a real number. For sake of simplicity we define $\delta:=\sum_{j=1}^{\lambda}\gamma_{j}$.
Then, for every fixed integer $b>1$ and $c>0$, 
\[
\sum_{\rho_{1}:\,\gamma_{1}>0}\cdots\sum_{\rho_{\lambda}:\,\gamma_{\lambda}>0}\frac{\gamma_{1}^{-\frac{1}{2}}\cdots\gamma_{\lambda}^{-\frac{1}{2}}}{\delta^{k+h+\alpha}}
\sum_{\mathbf{f}\in\left(\mathbb{N}^{+}\right)^{\alpha}}
\int_{0}^{\delta}v^{k-1+h+\alpha+\tau\left(\bm{\beta},\mathbf{r},\mathfrak{J}_{\lambda}\right)}
 e^{-\left\Vert \mathbf{f}\right\Vert ^{2}Nv^{2}/\delta^{2}-v}
 \log^{2c}\left(\frac{b\,\delta}{v}\right)\dx v
\]
converges if $k>\frac{\lambda}{2}-h.$
\end{lem}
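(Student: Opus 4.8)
The plan is to reduce the stated multiple series to the convergence criterion already established in Lemma~\ref{lem:generalized1} by first bounding the inner integral over $v$ and the sum over $\mathbf{f}\in(\mathbb{N}^+)^\alpha$, and then recognising that what remains is a sum over zeros of exactly the shape treated there. First I would deal with the $\mathbf{f}$-sum: since $v\le\delta$ we have $\|\mathbf{f}\|^2 N v^2/\delta^2 \le \|\mathbf{f}\|^2 N$, which is not directly useful, so instead I would keep the Gaussian factor and integrate it against the $\mathbf{f}$-sum. The key observation is that $\sum_{\mathbf{f}\in(\mathbb{N}^+)^\alpha} e^{-\|\mathbf{f}\|^2 N v^2/\delta^2}$ behaves like a product of one-dimensional theta-type sums, each of size $\ll (\delta/(v\sqrt N))^{?}$ for $v$ small and exponentially decaying for $v$ large; more precisely $\sum_{f\ge1} e^{-f^2 x}\ll x^{-1/2}$ for $x>0$ by comparison with the integral (as in \eqref{eq:omegaest}), so the full $\alpha$-fold sum is $\ll_\alpha (\delta/(v\sqrt N))^{\alpha}$ uniformly in $v\in(0,\delta]$. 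This turns the expression into
\[
\ll_{\alpha,N}\sum_{\rho_1:\gamma_1>0}\cdots\sum_{\rho_\lambda:\gamma_\lambda>0}\frac{\gamma_1^{-1/2}\cdots\gamma_\lambda^{-1/2}}{\delta^{k+h+\alpha}}\,\frac{\delta^{\alpha}}{N^{\alpha/2}}\int_0^\delta v^{k-1+h+\tau(\bm\beta,\mathbf r,\mathfrak J_\lambda)}\log^{2c}\!\Big(\frac{b\delta}{v}\Big)\dx v,
\]
where the $v^\alpha$ from the integrand cancels the $v^{-\alpha}$ from the $\mathbf{f}$-sum bound, and the $\delta^\alpha$ cancels $\delta^{-\alpha}$ from the denominator.

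Next I would estimate the remaining $v$-integral. After the substitution $v=\delta w$, $w\in(0,1]$, the integral becomes $\delta^{k+h+\tau(\bm\beta,\mathbf r,\mathfrak J_\lambda)}\int_0^1 w^{k-1+h+\tau(\bm\beta,\mathbf r,\mathfrak J_\lambda)}\log^{2c}(b/w)\dx w$, and this last integral is an absolute constant depending only on $k,h,c,b$ (and, through the range of $\tau(\bm\beta,\mathbf r,\mathfrak J_\lambda)\in(0,\lambda]$, on $\lambda$), since $k+h+\tau(\bm\beta,\mathbf r,\mathfrak J_\lambda)>0$ and a logarithmic factor to a fixed power is integrable against any positive power of $w$ near $0$. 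Collecting powers of $\delta$, the whole quantity is
\[
\ll_{\alpha,N,k,h,c,b,\lambda}\sum_{\rho_1:\gamma_1>0}\cdots\sum_{\rho_\lambda:\gamma_\lambda>0}\frac{\gamma_1^{-1/2}\cdots\gamma_\lambda^{-1/2}}{\delta^{\,k+h-\tau(\bm\beta,\mathbf r,\mathfrak J_\lambda)}}.
\]

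Finally I would invoke the elementary inequality $\gamma_1^{\beta_1}\cdots\gamma_\lambda^{\beta_\lambda}\le(\gamma_1+\cdots+\gamma_\lambda)^{\beta_1+\cdots+\beta_\lambda}=\delta^{\tau(\bm\beta,\mathbf 1,\mathfrak J_\lambda)}$ (here with $\mathbf r=\mathbf 1$, which is the relevant normalisation since the $r_j$ only appear through $\tau(\bm\beta,\mathbf r,\mathfrak J_\lambda)$ and can be absorbed), or rather its $\mathbf r$-weighted analogue \eqref{eq:stima semplice}, to bound $\delta^{\tau(\bm\beta,\mathbf r,\mathfrak J_\lambda)}\le$ (product of $\gamma_j^{\beta_j/r_j}$ times constants). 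This leaves
\[
\ll\sum_{\rho_1:\gamma_1>0}\cdots\sum_{\rho_\lambda:\gamma_\lambda>0}\frac{\gamma_1^{\beta_1/r_1-1/2}\cdots\gamma_\lambda^{\beta_\lambda/r_\lambda-1/2}}{\delta^{\,k+h}},
\]
and then replacing $\delta^{-(k+h)}$ by a bound obtained from the arithmetic--geometric mean inequality exactly as in the proof of Lemma~\ref{lem:generalized1} (distributing the exponent $k+h$ as $(k+h)/\lambda$ over each variable) reduces convergence to $\sum_{\rho:\gamma>0}\gamma^{-1/2-(k+h)/\lambda}<\infty$ for each factor. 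Using $\gamma(m)\sim 2\pi m/\log m$, each single series converges precisely when $1/2+(k+h)/\lambda>1$, i.e. $k+h>\lambda/2$, that is $k>\lambda/2-h$, which is the claimed condition. The main obstacle I expect is making the bound for the $\mathbf{f}$-sum both uniform in $v$ and compatible with the exponents in the integrand — one has to be careful that the $v$-dependence coming from $\sum_{\mathbf f}e^{-\|\mathbf f\|^2 Nv^2/\delta^2}\ll(\delta/(v\sqrt N))^\alpha$ really does cancel against the $v^{\alpha}$ present in $v^{k-1+h+\alpha+\tau}$, leaving an integrand $v^{k-1+h+\tau}\log^{2c}(b\delta/v)$ that is integrable down to $0$ (which it is, since $k+h+\tau>0$), and that the resulting constant is genuinely independent of the zeros; the remaining zero-sum manipulation is then routine and identical to Lemma~\ref{lem:generalized1}.
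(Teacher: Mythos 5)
There is a genuine gap: you silently drop the factor $e^{-v}$ from the integrand at your very first display, and the whole argument then runs off the rails. The paper's proof rests precisely on \emph{keeping} that factor: after bounding the $\mathbf{f}$-sum by $\omega_2^\alpha(Nv^2/\delta^2)\ll_N(\delta/v)^\alpha$ and cancelling the $v^\alpha$ and $\delta^\alpha$, one is left with $\delta^{-(k+h)}\int_0^\delta v^{k-1+h+\tau}e^{-v}\log^{2c}(b\delta/v)\,\dx v$, and the exponential cuts the integral off at $v=O(1)$, so it is $\ll_{k,h,\lambda,c}\log^{2c}\delta$ \emph{uniformly in} $\bm\beta$ (it is essentially a Gamma integral). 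Without $e^{-v}$, your substitution $v=\delta w$ correctly shows the integral is of genuine size $\delta^{k+h+\tau(\bm\beta,\mathbf r,\mathfrak J_\lambda)}$, so after cancelling $\delta^{-(k+h)}$ you are left with $\gamma_1^{-1/2}\cdots\gamma_\lambda^{-1/2}\,\delta^{\tau(\bm\beta,\mathbf r,\mathfrak J_\lambda)}$ (not $\delta^{-(k+h-\tau)}$ as you wrote — that is an arithmetic slip), and that grows in the $\gamma_j$, so the resulting series diverges. The $e^{-v}$ is therefore not a cosmetic factor; it is what makes $\delta^{-(k+h)}$ survive to be attacked by arithmetic–geometric mean.

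The last step compounds this. You appeal to \eqref{eq:stima semplice} to bound $\delta^{\tau(\bm\beta,\mathbf r,\mathfrak J_\lambda)}\le\prod_j\gamma_j^{\beta_j/r_j}\cdot\text{const}$, but \eqref{eq:stima semplice} goes the \emph{other} way: $\prod_j(\gamma_j/r_j)^{\beta_j/r_j}\le\tau(\bm\gamma,\mathbf r,\mathfrak J_\lambda)^{\tau(\bm\beta,\mathbf r,\mathfrak J_\lambda)}$, since each $\gamma_j/r_j\le\tau(\bm\gamma,\mathbf r,\mathfrak J_\lambda)$. The inequality you need is false when the $\gamma_j$ are of very different sizes. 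And even if it held, the resulting zero-sum $\sum\gamma_j^{\beta_j/r_j-1/2-(k+h)/\lambda}$ would require a strictly stronger condition on $k$ than the claimed $k>\lambda/2-h$; you quietly discard the extra $\beta_j/r_j$ in the exponent when you announce the criterion $\sum\gamma^{-1/2-(k+h)/\lambda}<\infty$. The correct route (and the paper's) is: keep $e^{-v}$, bound the integral by a constant times $\log^{2c}(\lambda\gamma_\star)$, so there is no $\delta^{\tau(\bm\beta,\mathbf r,\mathfrak J_\lambda)}$ and no $\gamma_j^{\beta_j/r_j}$ to deal with at all, and then apply AM--GM directly to $\delta^{-(k+h)}$ to obtain $\prod_j\gamma_j^{-1/2-(k+h)/\lambda}$, which converges precisely for $k>\lambda/2-h$.
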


\begin{proof}
We consider the integral
\begin{equation}
\sum_{\rho_{1}:\,\gamma_{1}>0}\cdots\sum_{\rho_{\lambda}:\,\gamma_{\lambda}>0}\frac{\gamma_{1}^{-\frac{1}{2}}\cdots\gamma_{\lambda}^{-\frac{1}{2}}}{{\delta}^{k+h+\alpha}}
\sum_{\mathbf{f}\in\left(\mathbb{N}^{+}\right)^{\alpha}}
\int_{0}^{\delta}v^{k-1+h+\alpha+\tau\left(\bm{\beta},\mathbf{r},\mathfrak{J}_{\lambda}\right)}e^{-\left\Vert \mathbf{f}\right\Vert ^{2}Nv^{2}{\delta}^{-2}}\exp\left(-v\right)\dx v.\label{int lemma}
\end{equation}
Now we claim that we can exchange the integral with the multiple series
$\sum_{\mathbf{f}\in\left(\mathbb{N}^{+}\right)^{\alpha}}$. To show
this we consider 
\[
\int_{0}^{\delta} v^{k-1+h+\alpha+\tau\left(\bm{\beta},\mathbf{r},\mathfrak{J}_{\lambda}\right)}
\sum_{f_{1}\geq1}e^{-f_{1}Nv^{2}{\delta}^{-2}}\omega_{2}^{\alpha-1}(Nv^{2}\delta^{-2})\exp\left(-v\right)\dx v.
\]
Now, since for every $M\geq1$ we have
\[
\sum_{f_{1}\leq M}e^{-f_{1}Nv^{2}\delta^{-2}}\leq\sum_{f_{1}\geq1}e^{-f_{1}Nv^{2}\delta^{-2}}
=
\omega_{2}(Nv^{2}\delta^{-2})
\ll_{N}
\frac{\delta}{v}
\]
from (\ref{eq:omegaest}) and so we have to deal with
\[
\int_{0}^{\delta}v^{k-2+h+\alpha+\tau\left(\bm{\beta},\mathbf{r},\mathfrak{J}_{\lambda}\right)}\omega_{2}^{\alpha-1}(Nv^{2}\delta^{-2})\exp\left(-v\right)\dx v\ll_{N,\alpha}
\delta^{\alpha-1}\int_{0}^{\delta}v^{k+h-1+\tau\left(\bm{\beta},\mathbf{r},\mathfrak{J}_{\lambda}\right)}\exp\left(-v\right)\dx v
\]
which is convergent since $k>0$, then we obtain
\[
\sum_{f_{1}\geq1}\int_{0}^{\delta}v^{k-1+h+\alpha+\frac{\beta_{1}}{r_{1}}+\dots+\frac{\beta_{\lambda}}{r_{\lambda}}}
e^{-f_{1}Nv^{2}{\delta}^{-2}}
\omega_{2}^{\alpha-1}(Nv^{2}\delta^{-2})\exp\left(-v\right)\dx v
\]
by the Dominated Convergence Theorem. Clearly, we can repeat the same
argument for every factor in the product $\omega_{2}^{\alpha-1}(Nv^{2}\delta^{-2})$
and so we can write (\ref{int lemma}) as
\[
\sum_{\rho_{1}:\,\gamma_{1}>0}\cdots\sum_{\rho_{\lambda}:\,\gamma_{\lambda}>0}\frac{\gamma_{1}^{-\frac{1}{2}}\cdots\gamma_{\lambda}^{-\frac{1}{2}}}{\delta^{k+h+\alpha}}
\int_{0}^{\delta}v^{k-1+h+\alpha+\tau\left(\bm{\beta},\mathbf{r},\mathfrak{J}_{\lambda}\right)}
\omega_{2}^{\alpha}(Nv^{2}\delta^{-2})\exp(-v)\dx v
\]
and again using  (\ref{eq:omegaest}) 
%we have
%\[
%\omega_{2}(Nv^{2}\delta^{-2})
%\ll_{N}
%\frac{\delta}{v}
%\]
%hence 
we have to deal with
\[
\sum_{\rho_{1}:\,\gamma_{1}>0}\cdots\sum_{\rho_{\lambda}:\,\gamma_{\lambda}>0}\frac{\gamma_{1}^{-\frac{1}{2}}\cdots\gamma_{\lambda}^{-\frac{1}{2}}}{{\delta}^{k+h+\alpha}}
\int_{0}^{\delta}v^{k-1+h+\tau\left(\bm{\beta},\mathbf{r},\mathfrak{J}_{\lambda}\right)}\exp\left(-v\right)\dx v.
\]
Now, since $k+h+\tau\left(\bm{\beta},\mathbf{r},\mathfrak{J}_{\lambda}\right)>0$, then 
\[
\int_{0}^{\delta}v^{k-1+h+\tau\left(\bm{\beta},\mathbf{r},\mathfrak{J}_{\lambda}\right)}\exp\left(-v\right)\dx v\ll\int_{0}^{+\infty}v^{k-1+h+\tau\left(\bm{\beta},\mathbf{r},\mathfrak{J}_{\lambda}\right)}\exp\left(-v\right)\dx v<+\infty.
\]
Then, from arithmetic mean - geometric mean inequality, we get
\begin{equation}
\sum_{\rho_{1}:\,\gamma_{1}>0}\cdots\sum_{\rho_{\lambda}:\,\gamma_{\lambda}>0}\frac{\gamma_{1}^{-\frac{1}{2}}\cdots\gamma_{\lambda}^{-\frac{1}{2}}}{\delta^{k+h+\alpha}}
\ll
\sum_{\rho_{1}:\,\gamma_{1}>0}\gamma_{1}^{-\frac{k}{\lambda}-\frac{1}{2}-\frac{h}{\lambda}}\cdots\sum_{\rho_{\lambda}:\,\gamma_{\lambda}>0}\gamma_{\lambda}^{-\frac{k}{\lambda}-\frac{1}{2}-\frac{h}{\lambda}}\label{eq:sumgammageneralized}
\end{equation}
and the series converges if $k>\frac{\lambda}{2}-h.$ Clearly, if
we have a log factor into the integral the bound for $k$ is the same.
Indeed, we note that
\[
\log^{2c}\left(b\frac{\delta}{v}\right)
\ll
\log^{2c}(\delta)+\log^{2c}\left(bv\right)
\]
and
\[
\log^{2c}(\delta)\leq\log^{2c}\left(\lambda\max_{\gamma_{j},\,j=1,\dots,\lambda}\gamma_{j}\right):=\log^{2c}\left(\lambda\gamma_{\star}\right)
\]

so we have in (\ref{eq:sumgammageneralized}) one series such that
\[
\sum_{\rho_{\star}:\,\gamma_{\star}>0}\gamma_{\star}^{-\frac{k}{\lambda}-\frac{1}{2}-\frac{h}{\lambda}}\log^{2c}\left(\lambda\gamma_{\star}\right)
\]
and clearly the log factor does not affect the bound for $k$; if
we have
\[
\int_{0}^{+\infty}v^{k-1+h+\tau\left(\bm{\beta},\mathbf{r},\mathfrak{J}_{\lambda}\right)}\exp\left(-v\right)\log^{2c}\left(bv\right)\dx v
\]
again, we have the same bounds for $k$ and so the Lemma is proved. 
\end{proof}

\section{proof of the main theorem}

In this section we prove the main theorem. We first show that the
error bound in the main formula is ``small'', then we prove that
all the exchange of symbols is justified and finally we evaluate
the integrals.

\subsection{Error term}

From (\ref{eq:asmipt2}), (\ref{eq:asimpt}) and following the subdivision
in \cite{CanGamZac}, formula $(2)$, we can write
\[
\widetilde{S}_{r_{1}}\left(z\right)\cdots\widetilde{S}_{r_{d}}\left(z\right)=T\left(z,r_{1}\right)\cdots T\left(z,r_{d}\right)
\]
\[
+\sum_{j=1}^{d}E\left(a,y,r_{j}\right)\left(\prod_{i\neq j}\widetilde{S}_{r_{i}}\left(z\right)\right)+\sum_{\underset{{\scriptstyle \left|I\right|\geq2}}{I\subseteq\mathfrak{D}}}c_{d}\left(I\right)\left(\prod_{i\in\mathfrak{D}\setminus I}T\left(z,r_{i}\right)\right)\left(\prod_{\ell\in I}E\left(a,y,r_{\ell}\right)\right)
\]
for some suitable coefficients $c_{d}\left(I\right)$, so we get
\begin{align*}
\frac{1}{2\pi i}\int_{\left(a\right)}e^{Nz}z^{-k-1}&\widetilde{S}_{r_{1}}\left(z\right)\cdots\widetilde{S}_{r_{d}}\left(z\right)\omega_{2}\left(z\right)^{h}\dx z  =\frac{1}{2\pi i}\int_{\left(a\right)}e^{Nz}z^{-k-1}T\left(z,r_{1}\right)\cdots T\left(z,r_{d}\right)\omega_{2}\left(z\right)^{h}\dx z\\
 & +\frac{1}{2\pi i}\sum_{j=1}^{d}\int_{\left(a\right)}e^{Nz}z^{-k-1}E\left(a,y,r_{j}\right)\left(\prod_{i\neq j}\widetilde{S}_{r_{i}}\left(z\right)\right)\omega_{2}\left(z\right)^{h}\dx z\\
 & +\sum_{\underset{{\scriptstyle \left|I\right|\geq2}}{I\subseteq\mathfrak{D}}}c_{d}\left(I\right)\int_{\left(a\right)}e^{Nz}z^{-k-1}\left(\prod_{i\in\mathfrak{D}\setminus I}T\left(z,r_{i}\right)\right)\left(\prod_{\ell\in I}E\left(a,y,r_{\ell}\right)\right)\omega_{2}\left(z\right)^{h}\dx z\\
 & =:A_{1}+A_{2}+A_{3}.
\end{align*}
 Now we have to estimate the error term. From (\ref{eq:PNT}), (\ref{eq:omegaest})
and (\ref{eq:error estimate}) we obtain
\[
\left|A_{2}\right|\ll\sum_{j=1}^{d}\int_{\left(a\right)}\left|e^{Nz}\right|\left|z^{-k-1}\right|\left|E\left(a,y,r_{j}\right)\right|\prod_{i\neq j}\left|\widetilde{S}_{r_{i}}\left(z\right)\right|\left|\omega_{2}\left(z\right)^{h}\right|\dx y
\]
\[
\ll_{\mathbf{r},d,h}e^{Na}a^{-h/2}\sum_{j=1}^{d}a^{-\tau\left(\mathbf{r},\mathfrak{D}\right)+\frac{1}{r_{j}}}\left(\int_{0}^{a}a^{-k-1}\left(1+a^{1/2}\right)\dx y+\int_{a}^{+\infty}y^{-k-1}\left(1+y^{1/2}\left(1+\log^{2}\left(\frac{y}{a}\right)\right)\right)\dx y\right)
\]
\begin{equation}
\ll_{\mathbf{r},d,h}e^{Na}a^{-k-h/2}\sum_{j=1}^{d}a^{-\tau\left(\mathbf{r},\mathfrak{D}\right)+\frac{1}{r_{j}}}\label{eq:main error term}
\end{equation}
for $k>0$. 

For the estimation of $A_{3}$ we fix $I\subseteq\mathfrak{D}$
and we consider
\[
\left|A_{3.I}\right|:=\int_{\left(a\right)}\left|e^{Nz}\right|\left|z^{-k-1}\right|\prod_{i\in\mathfrak{D}\setminus I}\left|T\left(z,r_{i}\right)\right|\prod_{\ell\in I}\left|E\left(a,y,r_{\ell}\right)\right|\left|\omega_{2}\left(z\right)^{h}\right|\dx y.
\]
We know from (\ref{eq:PNT}) and (\ref{eq:asimpt}) that 
\[
\left|T\left(z,r\right)\right|\ll_{r}a^{-1/r}+\left|E\left(a,y,r\right)\right|
\]
hence, using formula \eqref{eq:omegaest} it is enough to work with
\[
a^{-h/2}\int_{\left(a\right)}\left|e^{Nz}\right|\left|z^{-k-1}\right|\prod_{i\in\mathfrak{D}\setminus I}\left(a^{-1/r_{i}}+\left|E\left(a,y,r_{i}\right)\right|\right)\prod_{\ell\in I}\left|E\left(a,y,r_{\ell}\right)\right|\dx y.
\]
Now, observing that
\[
\prod_{i\in\mathfrak{D}\setminus I}\left(a^{-1/r_{i}}+\left|E\left(a,y,r_{i}\right)\right|\right)=\sum_{\mathfrak{J}\subseteq\mathfrak{D}\setminus I}a^{-\tau\left(\mathbf{r},\mathfrak{J}\right)}\prod_{i\in\mathfrak{D}\setminus\left(I\cup\mathfrak{J}\right)}\left|E\left(a,y,r_{i}\right)\right|
\]
we have by \eqref{eq:error estimate},
\begin{align*}
\left|A_{3.I}\right|\ll_{\mathbf{r},d,h}&e^{Na}a^{-h/2}\sum_{\mathfrak{J}\subseteq\mathfrak{D}\setminus I}\int_{\left(a\right)}a^{-\tau\left(\mathbf{r},\mathfrak{J}\right)}\prod_{i\in\mathfrak{D}\setminus\left(I\cup\mathfrak{J}\right)}\left|E\left(a,y,r_{i}\right)\right|\prod_{\ell\in I}\left|E\left(a,y,r_{\ell}\right)\right|\left|z^{-k-1}\right|\dx y
\\
\ll_{\mathbf{r},d,h}&e^{Na}a^{-h/2}\sum_{\mathfrak{J}\subseteq\mathfrak{D}\setminus I}a^{-\tau\left(\mathbf{r},\mathfrak{J}\right)}\int_{0}^{a}a^{-k-1}\left(1+a^{1/2}\right)^{\left|\mathfrak{D}\setminus\mathfrak{J}\right|}\dx y
\\
&+e^{Na}a^{-h/2}\sum_{\mathfrak{J}\subseteq\mathfrak{D}\setminus I}a^{-\tau\left(\mathbf{r},\mathfrak{J}\right)}\int_{a}^{+\infty}y^{-k-1}\left(1+y^{1/2}\left(1+\log^{2}\left(\frac{y}{a}\right)\right)\right)^{\left|\mathfrak{D}\setminus\mathfrak{J}\right|}\dx y
\\
\ll_{\mathbf{r},d,h}&e^{Na}a^{-k-h/2}\sum_{\mathfrak{J}\subseteq\mathfrak{D}\setminus I}a^{-\tau\left(\mathbf{r},\mathfrak{J}\right)}
\end{align*}
for $k>\frac{\left|\mathfrak{D}\setminus\mathfrak{J}\right|}{2}$
and since this inequality must holds for all subsets $\mathfrak{J}\subseteq\mathfrak{D},$
we have to assume $k>\frac{d}{2}$. Hence

\[
\left|A_{3}\right|\ll_{\mathbf{r},d,h}e^{Na}a^{-k-h/2}\sum_{\underset{{\scriptstyle \left|I\right|\geq2}}{I\subseteq\mathfrak{D}}}\sum_{\mathfrak{J}\subseteq\mathfrak{D}\setminus I}a^{-\tau\left(\mathbf{r},\mathfrak{J}\right)}.
\]

Now we take $a=1/N$ and we observe that
\[
\sum_{\underset{{\scriptstyle \left|I\right|\geq2}}{I\subseteq\mathfrak{D}}}\sum_{\mathfrak{J}\subseteq\mathfrak{D}\setminus I}N^{\tau\left(\mathbf{r},\mathfrak{J}\right)}\ll_{d}\max_{\underset{{\scriptstyle \left|I\right|\geq2}}{I\subseteq\mathfrak{D}}}\max_{\mathfrak{J}\subseteq\mathfrak{D}\setminus I}N^{\tau\left(\mathbf{r},\mathfrak{J}\right)}\ll_{d}N^{\tau\left(\mathbf{r},\mathfrak{D}\right)-\frac{1}{r_{j_{1}}}-\frac{1}{r_{j_{2}}}}
\ll_{d}
N^{k+h/2+\tau\left(\mathbf{r},\mathfrak{D}\right)-1/r_d}
\]
remembering that $1\le r_1\le r_2\le \ldots\le r_d$.  This error term is compatible with that of  Theorem \ref{main_thm}.

\subsection{Evaluation of the main term}

According to (14) we rewrite $A_{1}$ in the following form
\begin{align*}
A_{1} & =\frac{1}{2\pi i}\int_{\left(1/N\right)}e^{Nz}z^{-k-1}T\left(z,r_{1}\right)\cdots T\left(z,r_{d}\right)\omega_{2}\left(z\right)^{h}\dx z\\
 & =\frac{1}{2\pi i}
\frac1{\textbf{r}}\Gamma\left(\frac{1}{\textbf{r}}\right)
 \int_{\left(1/N\right)}e^{Nz}z^{-k-1-\tau\left(\mathbf{r},\mathfrak{D}\right)}\omega_{2}\left(z\right)^{h}\dx z\\
 & +\frac{\left(-1\right)^{d}}{2\pi i}\int_{\left(1/N\right)}e^{Nz}z^{-k-1}\left(
 \sum_{\bm{\rho}\in Z^{d}}\frac1{\textbf{r}}\Gamma\left(\frac{\bm{\rho}}{\textbf{r}}\right)
z^{-\tau(\bm{\rho},\textbf{r},\mathfrak{D})}
\right)\omega_{2}\left(z\right)^{h}\dx z
 \\
 & +\frac{1}{2\pi i}\sum_{\underset{{\scriptstyle \left|I\right|\geq1}}{I\subseteq\mathfrak{D}}}\left(-1\right)^{\left|\mathfrak{D}\setminus I\right|}\int_{\left(1/N\right)}e^{Nz}z^{-k-1-\tau\left(\mathbf{r},I\right)} 
 \left(
 \sum_{\bm{\rho}\in Z^{|\mathfrak{D}\setminus I|}}
\frac1{\textbf{r}}\Gamma\left(\frac{\bm{\rho}}{\textbf{r}}\right)
z^{-\tau\left(\bm{\rho},\mathbf{r},\mathfrak{D}\setminus I\right)}
\right)\omega_{2}\left(z\right)^{h}\dx z\\
 & =:I_{1}+I_{2}+I_{3}.
\end{align*}

$I_1$ corresponds to the terms $M_1$ and $M_2$ of Theorem \ref{main_thm}, $I_2$ corresponds to the terms $M_3$ and $M_4$ and finally $I_3$ corresponds to $M_3$.

\subsubsection{Evaluation of $I_{1}$}

We study $I_{1}$. By (\ref{eq:funceqomega}) and the binomial theorem,
we get
\begin{align*}
\omega_{2}\left(z\right)^{h}=&\left(\frac{1}{2}\left(\frac{\pi}{z}\right)^{1/2}-\frac{1}{2}+\left(\frac{\pi}{z}\right)^{1/2}\omega_{2}\left(\frac{\pi^{2}}{z}\right)\right)^{h}
\\
=&\sum_{\eta=0}^{h}\frac{\dbinom{h}{\eta}}{2^{\eta}}\left(\left(\frac{\pi}{z}\right)^{1/2}-1\right)^{\eta}\omega_{2}\left(\frac{\pi^{2}}{z}\right)^{h-\eta}\left(\frac{\pi}{z}\right)^{\frac{h-\eta}{2}}
\\
=&\sum_{\eta=0}^{h}\frac{\dbinom{h}{\eta}}{2^{\eta}}\sum_{\ell=0}^{\eta}\dbinom{\eta}{\ell}\left(-1\right)^{\eta-\ell}\left(\frac{\pi}{z}\right)^{\frac{h-\eta+\ell}{2}}\omega_{2}\left(\frac{\pi^{2}}{z}\right)^{h-\eta}
\end{align*}

and so
\begin{align*}
I_{1} & =\frac{1}{2\pi i}
\frac1{\textbf{r}}\Gamma\left(\frac{1}{\textbf{r}}\right)
\int_{\left(1/N\right)}e^{Nz}z^{-k-1-\tau\left(\mathbf{r},\mathfrak{D}\right)}\omega_{2}\left(z\right)^{h}\dx z\\
 & =\frac{1}{2\pi i}\sum_{\eta=0}^{h}\frac{\dbinom{h}{\eta}}{2^{\eta}}\sum_{\ell=0}^{\eta}\dbinom{\eta}{\ell}\pi^{\frac{h-\eta+\ell}{2}}\left(-1\right)^{\eta-\ell}
\frac1{\textbf{r}}\Gamma\left(\frac{1}{\textbf{r}}\right)
\int_{\left(1/N\right)}e^{Nz}z^{-k-1-\tau\left(\mathbf{r},\mathfrak{D}\right)-\frac{h-\eta+\ell}{2}}\omega_{2}\left(\frac{\pi^{2}}{z}\right)^{h-\eta}\dx z.
\end{align*}
Our main goal is to show that, for a suitable $k$, we can exchange
the integral with the involved series; in this case, with the series
related to $\omega_{2}$. We consider two cases: if $\eta=h$ we get
\[
I_{1,1}:=\frac{1}{2^{h+1}\pi i}\sum_{\ell=0}^{h}\dbinom{h}{\ell}\pi^{\frac{\ell}{2}}\left(-1\right)^{h-\ell}
\frac1{\textbf{r}}\Gamma\left(\frac{1}{\textbf{r}}\right)
\int_{\left(1/N\right)}e^{Nz}z^{-k-1-\tau\left(\mathbf{r},\mathfrak{D}\right)-\frac{\ell}{2}}\dx z
\]
which corresponds to the term $M_1$ in Theorem \ref{main_thm} and, from the substitution $Nz=u$ and (\ref{eq:onemaintool}), we
get
\begin{align*}
I_{1,1}=&\frac{1}{2^{h+1}\pi i}\sum_{\ell=0}^{h}\dbinom{h}{\ell}\pi^{\frac{\ell}{2}}\left(-1\right)^{h-\ell}N^{k+\tau\left(\mathbf{r},d\right)+\frac{\ell}{2}}
\frac1{\textbf{r}}\Gamma\left(\frac{1}{\textbf{r}}\right)
\int_{\left(1\right)}e^{u}u^{-k-1-\tau\left(\mathbf{r},\mathfrak{D}\right)-\frac{\ell}{2}}\dx u
\\
=&\frac{1}{2^{h}}\sum_{\ell=0}^{h}\dbinom{h}{\ell}\frac{\pi^{\frac{\ell}{2}}\left(-1\right)^{h-\ell}N^{k+\tau\left(\mathbf{r},\mathfrak{D}\right)+\frac{\ell}{2}}}{\Gamma\left(k+1+\tau\left(\mathbf{r},\mathfrak{D}\right)+\frac{\ell}{2}\right)}
\frac1{\textbf{r}}\Gamma\left(\frac{1}{\textbf{r}}\right)
\end{align*}
for $k+1+\tau\left(\mathbf{r},\mathfrak{D}\right)+\frac{\ell}{2}>0$,
which is trivially true if $k>0$. Now, fix $1\leq\lambda\leq h-\eta$.

We consider the general case
\begin{equation}
I_{1,2,\lambda}:=\sum_{f_{1}\geq1}\cdots\sum_{f_{\lambda}\geq1}\int_{\left(1/N\right)}\left|e^{Nz}\right|\left|z\right|^{-k-1-\tau\left(\mathbf{r},\mathfrak{D}\right)-\frac{h-\eta+\ell}{2}}e^{-\pi^{2}\mathrm{Re}\left(1/z\right)\left(f_{1}^{2}+\dots+f_{\lambda}^{2}\right)}\left|\omega_{2}\left(\frac{\pi^{2}}{z}\right)\right|^{h-\eta-\lambda}\left|\dx z\right|.\label{eq:I12 general case}
\end{equation}
Note that if $I_{1,2,\lambda}$ converges for all $\lambda,$ the
exchange between series and integral is justified. By the trivial
estimate
\[
\mathrm{Re}\left(\frac{1}{z}\right)=\frac{N}{1+y^{2}N^{2}}\gg\begin{cases}
N, & \left|y\right|\leq1/N\\
1/\left(Ny^{2}\right), & \left|y\right|>1/N,
\end{cases}
\]
by (\ref{eq:trivial estimate}) and by (\ref{eq:omegaest}), we obtain 
\begin{align*}
I_{1,2,\lambda}\ll_{h,\eta,\lambda}&\sum_{\mathbf{f}\in\left(\mathbb{N}^{+}\right)^{\lambda}}\int_{0}^{1/N}N^{k+1+\tau\left(\mathbf{r},\mathfrak{D}\right)+\frac{\lambda+\ell}{2}}e^{-\pi^{2}N\left(\left\Vert \mathbf{f}\right\Vert ^{2}\right)}\dx y
\\
&+N^{h-\eta-\lambda}\sum_{\mathbf{f}\in\left(\mathbb{N}^{+}\right)^{\lambda}}\int_{1/N}^{+\infty}y^{-k-1-\tau\left(\mathbf{r},\mathfrak{D}\right)+\frac{h-\eta-\ell}{2}-\lambda}e^{-\frac{\pi^{2}\left(\left\Vert \mathbf{f}\right\Vert ^{2}\right)}{Ny^{2}}}\dx y.
\end{align*}
The first integral and the series trivially converge since $N$ is
positive, then we can consider only the second integral. Making the
substitution $v=\frac{\pi^{2}\left(\left\Vert \mathbf{f}\right\Vert ^{2}\right)}{Ny^{2}},$ we
get 
\begin{align*}
\sum_{\mathbf{f}\in\left(\mathbb{N}^{+}\right)^{\lambda}}&\int_{1/N}^{+\infty}y^{-k-1-\tau\left(\mathbf{r},\mathfrak{D}\right)+\frac{h-\eta-\ell}{2}-\lambda}e^{-\frac{\pi^{2}\left(\left\Vert \mathbf{f}\right\Vert ^{2}\right)}{Ny^{2}}}\dx y
\\
&\ll_{N,h,\eta,\lambda}
\sum_{\mathbf{f}\in\left(\mathbb{N}^{+}\right)^{\lambda}}
\left\Vert \mathbf{f}\right\Vert ^{-\big(k+\tau\left(\mathbf{r},\mathfrak{D}\right)-\frac{h-\eta-\ell}{2}+\lambda\big)}
\int_{0}^{+\infty}v^{\frac12 \big(k+\tau\left(\mathbf{r},\mathfrak{D}\right)-\frac{h-\eta-\ell}{2}+\lambda-1\big)-1}e^{-v}\dx v.
\end{align*}

Now, the integral is convergent if $k+\tau\left(\mathbf{r},\mathfrak{D}\right)-\frac{h-\eta-\ell}{2}+\lambda-1>0$,
which means $k>-\tau\left(\mathbf{r},\mathfrak{D}\right)+\frac{h-\eta-\ell}{2}-\lambda+1$
and the series is convergent if $k+\tau\left(\mathbf{r},\mathfrak{D}\right)-\frac{h-\eta-\ell}{2}+\lambda>\lambda$,
from the inequality of arithmetic and geometric means, and so $k>-\tau\left(\mathbf{r},\mathfrak{D}\right)+\frac{h-\eta-\ell}{2}$.
Since the inequalities must holds for all $1\leq\lambda\leq h-\eta$,
for all $0\leq\ell\leq\eta$ and for all $0\leq\eta\leq h-1$, we
can conclude that we can exchange all the series with the integral
if $k>-\tau\left(\mathbf{r},\mathfrak{D}\right)+\frac{h}{2}.$ Hence,
using (\ref{eq:bessel}) we can finally write
\begin{align*}
I_{1,2} & =\frac{1}{2\pi i}\sum_{\eta=0}^{h-1}\frac{\dbinom{h}{\eta}}{2^{\eta}}\sum_{\ell=0}^{\eta}\dbinom{\eta}{\ell}\pi^{\frac{h-\eta+\ell}{2}}\left(-1\right)^{\eta-\ell}
\frac1{\textbf{r}}\Gamma\left(\frac{1}{\textbf{r}}\right)
\sum_{\mathbf{f}\in\left(\mathbb{N}^{+}\right)^{h-\eta}}\int_{\left(1/N\right)}e^{Nz}z^{-k-1-\tau\left(\mathbf{r},\mathfrak{D}\right)-\frac{h-\eta+\ell}{2}}e^{-\frac{\pi^{2}\left\Vert \mathbf{f}\right\Vert ^{2}}{z}}\dx z\\
 & =\frac{N^{k+\tau\left(\mathbf{r},\mathfrak{D}\right)}}{2\pi i}\sum_{\eta=0}^{h-1}\frac{\dbinom{h}{\eta}}{2^{\eta}}\sum_{\ell=0}^{\eta}\dbinom{\eta}{\ell}\left(N\pi\right)^{\frac{h-\eta+\ell}{2}}\left(-1\right)^{\eta-\ell}
\frac1{\textbf{r}}\Gamma\left(\frac{1}{\textbf{r}}\right)
\sum_{\mathbf{f}\in\left(\mathbb{N}^{+}\right)^{h-\eta}}\int_{\left(1\right)}e^{u}u^{-k-1-\tau\left(\mathbf{r},\mathfrak{D}\right)-\frac{h-\eta+\ell}{2}}e^{-\frac{\pi^{2}\left\Vert \mathbf{f}\right\Vert ^{2}N}{u}}\dx u\\
 & =\frac{N^{\frac{k+\tau\left(\mathbf{r},\mathfrak{D}\right)}{2}}}{\pi^{k+\tau\left(\mathbf{r},\mathfrak{D}\right)}}\sum_{\eta=0}^{h-1}\frac{\dbinom{h}{\eta}}{2^{\eta}}\sum_{\ell=0}^{\eta}\dbinom{\eta}{\ell}N^{\frac{h-\eta+\ell}{4}}\left(-1\right)^{\eta-\ell}
\frac1{\textbf{r}}\Gamma\left(\frac{1}{\textbf{r}}\right)
\sum_{\mathbf{f}\in\left(\mathbb{N}^{+}\right)^{h-\eta}}\frac{J_{k+\tau\left(\mathbf{r},\mathfrak{D}\right)+\frac{h-\eta+\ell}{2}}\left(2\pi\sqrt{N}\left\Vert \mathbf{f}\right\Vert \right)}{\left\Vert \mathbf{f}\right\Vert ^{k+\tau\left(\mathbf{r},\mathfrak{D}\right)+\frac{h-\eta+\ell}{2}}}
\end{align*}
for $k>-\tau\left(\mathbf{r},\mathfrak{D}\right)+\frac{h}{2}.$ This term corresponds to $M_2$ in Theorem \ref{main_thm}.

\subsubsection{Evaluation of $I_{2}$}

As in the previous case, we split the integral into two pieces
\begin{align*}
I_{2} & =\frac{\left(-1\right)^{d}}{2\pi i}\int_{\left(1/N\right)}e^{Nz}z^{-k-1} \left( 
 \sum_{\bm{\rho}\in Z^{d}}\frac1{\textbf{r}}\Gamma\left(\frac{\bm{\rho}}{\textbf{r}}\right)
z^{-\tau(\bm{\rho},\textbf{r},\mathfrak{D})}
\right)\omega_{2}\left(z\right)^{h}\dx z
\\
 & =\frac{\left(-1\right)^{d}}{2\pi i}\sum_{\eta=0}^{h}\frac{\dbinom{h}{\eta}}{2^{\eta}}\sum_{\ell=0}^{\eta}\dbinom{\eta}{\ell}\pi^{\frac{h-\eta+\ell}{2}}\left(-1\right)^{\eta-\ell}\int_{\left(1/N\right)}e^{Nz}z^{-k-1-\frac{h-\eta+\ell}{2}}\left( 
 \sum_{\bm{\rho}\in Z^{d}}\frac1{\textbf{r}}\Gamma\left(\frac{\bm{\rho}}{\textbf{r}}\right)
z^{-\tau(\bm{\rho},\textbf{r},\mathfrak{D})}
 \right)\omega_{2}\left(\frac{\pi^{2}}{z}\right)^{h-\eta}\dx z
 \\
 & =\frac{\left(-1\right)^{d}}{2^{h+1}\pi i}\sum_{\ell=0}^{h}\dbinom{h}{\ell}\pi^{\frac{\ell}{2}}\left(-1\right)^{h-\ell}\int_{\left(1/N\right)}e^{Nz}z^{-k-1-\frac{\ell}{2}}\left( 
 \sum_{\bm{\rho}\in Z^{d}}\frac1{\textbf{r}}\Gamma\left(\frac{\bm{\rho}}{\textbf{r}}\right)
z^{-\tau(\bm{\rho},\textbf{r},\mathfrak{D})}
 \right)\dx z
 \\
 & +\frac{\left(-1\right)^{d}}{2\pi i}\sum_{\eta=0}^{h-1}\frac{\dbinom{h}{\eta}}{2^{\eta}}\sum_{\ell=0}^{\eta}\dbinom{\eta}{\ell}\pi^{\frac{h-\eta+\ell}{2}}\left(-1\right)^{\eta-\ell}\int_{\left(1/N\right)}e^{Nz}z^{-k-1-\frac{h-\eta+\ell}{2}}\left( 
 \sum_{\bm{\rho}\in Z^{d}}\frac1{\textbf{r}}\Gamma\left(\frac{\bm{\rho}}{\textbf{r}}\right)
z^{-\tau(\bm{\rho},\textbf{r},\mathfrak{D})}
 \right)\omega_{2}\left(\frac{\pi^{2}}{z}\right)^{h-\eta}\dx z\\
 & =:I_{2,1}+I_{2,2}.
\end{align*}

Let us consider $I_{2,1}$ which corresponds to $M_3$ in Theorem \ref{main_thm}. We want to show that it is possible to
exchange the integral with the product of the series involving the
non-trivial zeros if the Riemann Zeta function. To prove this, we
fix an arbitrary $1\leq\lambda\leq d$ and we analyze
\[
\sum_{\rho_{1}}\frac{\left|\Gamma\left(\frac{\rho_{1}}{r_{1}}\right)\right|}{r_{1}}\cdots\sum_{\rho_{\lambda}}\frac{\left|\Gamma\left(\frac{\rho_{\lambda}}{r_{\lambda}}\right)\right|}{r_{\lambda}}\int_{\left(1/N\right)}\left|e^{Nz}\right|\left|z\right|^{-k-1-\frac{\ell}{2}}\left|z^{-\tau\left(\bm{\rho},\mathbf{r},\mathfrak{J}_{\lambda}\right)}\right|\prod_{s=\lambda+1}^{d}\left|\sum_{\rho_{s}}\frac{\Gamma\left(\frac{\rho_{s}}{r_{s}}\right)}{r_{s}}z^{-\frac{\rho_{s}}{r_{s}}}\right|\left|\dx z\right|
\]
with the convention that, if $\lambda=d$, then $\prod_{s=\lambda+1}^{d}\left|\sum_{\rho_{s}}\frac{\Gamma\left(\frac{\rho_{s}}{r_{s}}\right)}{r_{s}}z^{-\frac{\rho_{s}}{r_{s}}}\right|=1$.
From Stirling formula (\ref{eq:Stirling}) and (\ref{eq:estserieszeros0})
we have that it is enough to study the convergence of
\begin{align*}
\sum_{\rho_{1}}\left|\gamma_{1}\right|^{\frac{\beta_{1}}{r_{1}}-\frac{1}{2}}\cdots\sum_{\rho_{\lambda}}\left|\gamma_{\lambda}\right|^{\frac{\beta_{\lambda}}{r_{\lambda}}-\frac{1}{2}}&\int_{\mathbb{R}}\left|z\right|^{-k-1-\frac{\ell}{2}-\tau\left(\bm{\beta},\mathbf{r},\mathfrak{J}_{\lambda}\right)}
\\
&\times\exp\left(\sum_{j=1}^{\lambda}\left(\frac{\gamma_{j}}{r_{j}}\arctan\left(Ny\right)-\frac{\pi\left|\gamma_{j}\right|}{2r_{j}}\right)\right)\prod_{s=\lambda+1}^{d}\left|\sum_{\rho_{s}}\frac{\Gamma\left(\frac{\rho_{s}}{r_{s}}\right)}{r_{s}}z^{-\frac{\rho_{s}}{r_{s}}}\right|\dx y.
\end{align*}

We split the integral in $\left|y\right|\leq1/N$ and $\left|y\right|>1/N$.
Assume that $\left|y\right|\leq1/N$, then, by (\ref{eq:estserieszeros0}),
we have
\begin{align*}
\sum_{\rho_{1}}\left|\gamma_{1}\right|^{\frac{\beta_{1}}{r_{1}}-\frac{1}{2}}\cdots\sum_{\rho_{\lambda}}\left|\gamma_{\lambda}\right|^{\frac{\beta_{\lambda}}{r_{\lambda}}-\frac{1}{2}}
&\int_{-1/N}^{1/N}\left|z\right|^{-k-1-\frac{\ell}{2}}\left|z\right|^{-\tau\left(\bm{\beta},\mathbf{r},\mathfrak{J}_{\lambda}\right)}
\exp\left(\sum_{j=1}^{\lambda}\left(\frac{\gamma_{j}}{r_{j}}\arctan\left(Ny\right)-\frac{\pi\left|\gamma_{j}\right|}{2r_{j}}\right)\right)N^{d-\lambda}\dx y
\\
&\ll_{N,\lambda,d}\sum_{\rho_{1}}\left|\gamma_{1}\right|^{\frac{\beta_{1}}{r_{1}}-\frac{1}{2}}\exp\left(-\frac{\pi\left|\gamma_{1}\right|}{4r_{1}}\right)\cdots\sum_{\rho_{\lambda}}\left|\gamma_{\lambda}\right|^{\frac{\beta_{\lambda}}{r_{\lambda}}-\frac{1}{2}}\exp\left(-\frac{\pi\left|\gamma_{\lambda}\right|}{4r_{\lambda}}\right)
\end{align*}
and the series trivially converges, so assume that $\left|y\right|>1/N.$
It is enough considering the case
\begin{align*}
\sum_{\rho_{1}}\left|\gamma_{1}\right|^{\frac{\beta_{1}}{r_{1}}-\frac{1}{2}}\cdots\sum_{\rho_{\lambda}}\left|\gamma_{\lambda}\right|^{\frac{\beta_{\lambda}}{r_{\lambda}}-\frac{1}{2}}
&\int_{\left|y\right|>1/N}\left|z\right|^{-k-1-\frac{\ell}{2}-\tau\left(\bm{\beta},\mathbf{r},\mathfrak{J}_{\lambda}\right)+\frac{\alpha}{2}}
\\
&\times\exp\left(\sum_{j=1}^{\lambda}\left(\frac{\gamma_{j}}{r_{j}}\arctan\left(Ny\right)-\frac{\pi\left|\gamma_{j}\right|}{2r_{j}}\right)\right)\log^{2\alpha}\left(2N\left|y\right|\right)\dx y
\end{align*}
for $1\leq\alpha\leq d-\lambda$, since the powers of $N$ do not
affect the study of the convergence and so can be omitted. Assume
$y>1/N$ and $\gamma_{j}>0,\,j=1,\dots,\lambda$. Putting $Ny=u$
and using the well-known identity $\arctan(x)-\frac{\pi}{2}=-\arctan\left(\frac{1}{x}\right)$
we get
\[
\sum_{\rho_{1}:\,\gamma_{1}>0}\gamma_{1}^{\frac{\beta_{1}}{r_{1}}-\frac{1}{2}}\cdots\sum_{\rho_{\lambda}:\,\gamma_{\lambda}>0}\gamma_{\lambda}^{\frac{\beta_{\lambda}}{r_{\lambda}}-\frac{1}{2}}
\int_{1}^{+\infty}u^{-k-1-\frac{\ell}{2}-\tau\left(\bm{\beta},\mathbf{r},\mathfrak{J}_{\lambda}\right)+\frac{\alpha}{2}}\exp\left(-\arctan\left(\frac{1}{u}\right)\tau\left(\bm{\gamma},\mathbf{r},\mathfrak{J}_{\lambda}\right)\right)\log^{2\alpha}\left(2u\right)\dx u
\]
and, by Lemma \ref{lem:generalized1}, we have the convergence if
$k>\frac{\lambda+\alpha-\ell}{2}$, and since this inequality must
be holds for all $0\leq\ell\leq d$ and all $1\leq\alpha\leq d-\lambda$
we can conclude that $k>\frac{d}{2}.$ Now, fix $1\leq\eta\leq\lambda$
and assume that $\gamma_{1},\dots,\gamma_{\eta}>0$ and $\gamma_{\eta+1},\dots,\gamma_{\lambda}<0$.
In this case, recalling that $y>1/N$ and so $\frac{\gamma_{j}}{r_{j}}\arctan\left(Ny\right)-\frac{\pi\left|\gamma_{j}\right|}{2r_{j}}\leq-\frac{\pi\left|\gamma_{j}\right|}{2r_{j}}$
for $j>\eta$, we have to work with
\begin{align*}
\sum_{\rho_{1}:\,\gamma_{1}>0}\gamma_{1}^{\frac{\beta_{1}}{r_{1}}-\frac{1}{2}}\cdots&\sum_{\rho_{\eta}:\,\gamma_{\eta}>0}\gamma_{\eta}^{\frac{\beta_{\eta}}{r_{\eta}}-\frac{1}{2}}\sum_{\rho_{\eta+1}:\,\gamma_{\eta+1}<0}\left|\gamma_{\eta+1}\right|^{\frac{\beta_{\eta+1}}{r_{\eta+1}}-\frac{1}{2}}\exp\left(-\frac{\pi\left|\gamma_{\eta+1}\right|}{2r_{\eta+1}}\right)\cdots\sum_{\rho_{\lambda}:\,\gamma_{\lambda}<0}\left|\gamma_{\lambda}\right|^{\frac{\beta_{\lambda}}{r_{\lambda}}-\frac{1}{2}}\exp\left(-\frac{\pi\left|\gamma_{\lambda}\right|}{2r_{\lambda}}\right)
\\
&\times\int_{y>1/N}y^{-k-1-\frac{\ell}{2}-\tau\left(\bm{\beta},\mathbf{r},\mathfrak{J}_{\lambda}\right)+\frac{\alpha}{2}}\exp\left(\sum_{j=1}^{\eta}\left(\frac{\gamma_{j}}{r_{j}}\arctan\left(Ny\right)-\frac{\pi\gamma_{j}}{2r_{j}}\right)\right)\log^{2\alpha}\left(2Ny\right)\dx y
\end{align*}
with $1\leq\alpha\leq d-\lambda$. Letting $Ny=u$, we note that we
have to deal with
\begin{align*}
\sum_{\rho_{1}:\,\gamma_{1}>0}\gamma_{1}^{\frac{\beta_{1}}{r_{1}}-\frac{1}{2}}\cdots&\sum_{\rho_{\eta}:\,\gamma_{\eta}>0}\gamma_{\eta}^{\frac{\beta_{\eta}}{r_{\eta}}-\frac{1}{2}}\sum_{\rho_{\eta+1}:\,\gamma_{\eta+1}<0}\left|\gamma_{\eta+1}\right|^{\frac{\beta_{\eta+1}}{r_{\eta+1}}-\frac{1}{2}}\exp\left(-\frac{\pi\left|\gamma_{\eta+1}\right|}{2r_{\eta+1}}\right)\cdots\sum_{\rho_{\lambda}:\,\gamma_{\lambda}<0}\left|\gamma_{\lambda}\right|^{\frac{\beta_{\lambda}}{r_{\lambda}}-\frac{1}{2}}\exp\left(-\frac{\pi\left|\gamma_{\lambda}\right|}{2r_{\lambda}}\right)
\\
&\times\int_{1}^{+\infty}u^{-k-1-\frac{\ell}{2}-\tau\left(\bm{\beta},\mathbf{r},\mathfrak{J}_{\lambda}\right)+\frac{\alpha}{2}}
\exp\left(-\arctan\left(\frac{1}{u}\right)\tau\left(\bm{\gamma},\mathbf{r},\mathfrak{J}_{\eta}\right)\right)\log^{2\alpha}\left(2u\right)\dx u
\end{align*}
(also in this case we omit the powers of $N$ because they do not
affect the convergence) and, since
\[
\left(\frac{1}{u}\right)^{\frac{\beta_{j}}{r_{j}}}<1,\,u>1,\,j=1,\dots,\lambda,
\]
it is enough to consider
\[
\sum_{\rho_{1}:\,\gamma_{1}>0}\gamma_{1}^{\frac{\beta_{1}}{r_{1}}-\frac{1}{2}}\cdots\sum_{\rho_{\eta}:\,\gamma_{\eta}>0}\gamma_{\eta}^{\frac{\beta_{\eta}}{r_{\eta}}-\frac{1}{2}}\int_{1}^{+\infty}u^{-k-1-\frac{\ell}{2}-\tau\left(\bm{\beta},\mathbf{r},\mathfrak{J}_{\eta}\right)+\frac{\alpha}{2}}\exp\left(-\arctan\left(\frac{1}{u}\right)\tau\left(\bm{\gamma},\mathbf{r},\mathfrak{J}_{\eta}\right)\right)\log^{2\alpha}\left(2u\right)\dx u
\]
and so, arguing as in the previous case, the convergence if $k>\frac{\eta+d-\lambda}{2}$
and so, since $\eta\leq\lambda$, a complete convergence in the case
$k>\frac{d}{2}$. If $y<-1/N$ we get the same bounds for $k$, by
symmetry. 

Now, since $\left|\mathfrak{D}\right|=d$, we have  
\begin{align*}
I_{2,1}=&\frac{\left(-1\right)^{d}}{2^{h+1}\pi i}\sum_{\ell=0}^{h}\dbinom{h}{\ell}\pi^{\frac{\ell}{2}}\left(-1\right)^{h-\ell}\sum_{\bm{\rho}\in Z^{d}}\frac1{\textbf{r}}\Gamma\left(\frac{\bm{\rho}}{\textbf{r}}\right)\int_{\left(1/N\right)}e^{Nz}z^{-k-1-\frac{\ell}{2}-\tau\left(\bm{\rho},\mathbf{r},\mathfrak{D}\right)}\dx z
\\
=&\frac{N^{k}\left(-1\right)^{d}}{2^{h}}\sum_{\ell=0}^{h}\dbinom{h}{\ell}\left(N\pi\right)^{\frac{\ell}{2}}\left(-1\right)^{h-\ell}\sum_{\bm{\rho}\in Z^{d}}
\frac1{\textbf{r}}\Gamma\left(\frac{\bm{\rho}}{\textbf{r}}\right)
\frac{N^{\tau\left(\bm{\rho},\mathbf{r},\mathfrak{D}\right)}}{\Gamma\left(k+1+\frac{\ell}{2}+\tau\left(\bm{\rho},\mathbf{r},\mathfrak{D}\right)\right)},
\end{align*}
from \eqref{eq:onemaintool}.

Now, we analyze $I_{2,2}$ (which corresponds to $M_4$ in Theorem \ref{main_thm}) and we prove that we can switch the integral
with the series involving the non-trivial zeros of the Riemann Zeta
function and with the powers of $\omega_{2}\left(\frac{\pi^{2}}{z}\right)$.
As the previous calculations, we fix $1\leq\lambda\leq d$ and $1\leq\alpha\leq h-\eta$.
So we have to consider 
\begin{align*}
\sum_{\rho_{1}}\frac{\left|\Gamma\left(\frac{\rho_{1}}{r_{1}}\right)\right|}{r_{1}}\cdots\sum_{\rho_{\lambda}}\frac{\left|\Gamma\left(\frac{\rho_{\lambda}}{r_{\lambda}}\right)\right|}{r_{\lambda}}
&\sum_{\mathbf{f}\in\left(\mathbb{N}^{+}\right)^{\alpha}}\int_{\left(1/N\right)}\left|e^{Nz}\right|e^{-\mathrm{Re}\left(\frac{\pi^{2}}{z}\right)\left\Vert \mathbf{f}\right\Vert ^{2}}\left|z\right|^{-k-1-\frac{h-\eta+\ell}{2}}\left|z^{-\tau\left(\bm{\rho},\mathbf{r},\mathfrak{J}_{\lambda}\right)}\right|
\\
&\times\prod_{s=\lambda+1}^{d}\left|\sum_{\rho_{s}}\frac{\Gamma\left(\frac{\rho_{s}}{r_{s}}\right)}{r_{s}}z^{-\frac{\rho_{s}}{r_{s}}}\right|\left|\omega_{2}\left(\frac{\pi^{2}}{z}\right)\right|^{h-\eta-\alpha}\left|\dx z\right|
\end{align*}
again with the convention $\prod_{s=\lambda+1}^{d}\left|\sum_{\rho_{s}}\frac{\Gamma\left(\frac{\rho_{s}}{r_{s}}\right)}{r_{s}}z^{-\frac{\rho_{s}}{r_{s}}}\right|=1$
if $\lambda=d$. From (\ref{eq:Stirling}) and recalling that the powers of $N$ do not affect the convergence, it is enough to study the convergence of
\begin{align*}
\sum_{\rho_{1}}\left|\gamma_{1}\right|^{\frac{\beta_{1}}{r_{1}}-\frac{1}{2}}\cdots&\sum_{\rho_{\lambda}}\left|\gamma_{\lambda}\right|^{\frac{\beta_{\lambda}}{r_{\lambda}}-\frac{1}{2}}
\sum_{\mathbf{f}\in\left(\mathbb{N}^{+}\right)^{\alpha}}\int_{\mathbb{R}}\left|z\right|^{-k-1-\frac{h-\eta+\ell}{2}}\left|z\right|^{-\tau\left(\bm{\beta},\mathbf{r},\mathfrak{J}_{\lambda}\right)}e^{-\frac{\left\Vert \mathbf{f}\right\Vert ^{2}N}{1+N^{2}y^{2}}}
\\
&\times\exp\left(\sum_{j=1}^{\lambda}\left(\frac{\gamma_{j}}{r_{j}}\arctan\left(Ny\right)-\frac{\pi\left|\gamma_{j}\right|}{2r_{j}}\right)\right)\prod_{s=\lambda+1}^{d}\left|\sum_{\rho_{s}}\frac{\Gamma\left(\frac{\rho_{s}}{r_{s}}\right)}{r_{s}}z^{-\frac{\rho_{s}}{r_{s}}}\right|\left(\frac{1+y^{2}N^{2}}{N}\right)^{\frac{h-\eta-\alpha}{2}}\dx y.
\end{align*}
If $\left|y\right|\leq1/N$ we have
\begin{align*}
\sum_{\rho_{1}}\left|\gamma_{1}\right|^{\frac{\beta_{1}}{r_{1}}-\frac{1}{2}}\cdots&\sum_{\rho_{\lambda}}\left|\gamma_{\lambda}\right|^{\frac{\beta_{\lambda}}{r_{\lambda}}-\frac{1}{2}}\sum_{\mathbf{f}\in\left(\mathbb{N}^{+}\right)^{\alpha}}\int_{\mathbb{-}1/N}^{1/N}\left|z\right|^{-k-1-\frac{h-\eta+\ell}{2}}\left|z\right|^{-\tau\left(\bm{\beta},\mathbf{r},\mathfrak{J}_{\lambda}\right)}e^{-\frac{\left\Vert \mathbf{f}\right\Vert ^{2}N}{1+N^{2}y^{2}}}
\\
&\times\exp\left(\sum_{j=1}^{\lambda}\left(\frac{\gamma_{j}}{r_{j}}\arctan\left(Ny\right)-\frac{\pi\left|\gamma_{j}\right|}{2r_{j}}\right)\right)\prod_{s=\lambda+1}^{d}\left|\sum_{\rho_{s}}\frac{\Gamma\left(\frac{\rho_{s}}{r_{s}}\right)}{r_{s}}z^{-\frac{\rho_{s}}{r_{s}}}\right|\left(\frac{1+y^{2}N^{2}}{N}\right)^{\frac{h-\eta-\alpha}{2}}\dx y
\\
\ll_{N,k,\alpha,h,\eta,\ell,\mathbf{r}}&\sum_{\rho_{1}}\left|\gamma_{1}\right|^{\frac{\beta_{1}}{r_{1}}-\frac{1}{2}}\exp\left(-\frac{\pi\left|\gamma_{1}\right|}{4r_{1}}\right)\cdots\sum_{\rho_{\lambda}}\left|\gamma_{\lambda}\right|^{\frac{\beta_{\lambda}}{r_{\lambda}}-\frac{1}{2}}\exp\left(-\frac{\pi\left|\gamma_{1}\right|}{4r_{1}}\right)\sum_{\mathbf{f}\in\left(\mathbb{N}^{+}\right)^{\alpha}}e^{-\left\Vert \mathbf{f}\right\Vert ^{2}N}
\end{align*}
and trivially the convergence, so we consider now $\left|y\right|>1/N$.
If we fix $1\leq\mu\leq d-\lambda$, from (\ref{eq:estserieszeros0}),
it is enough to work with
\begin{align*}
\sum_{\rho_{1}}\left|\gamma_{1}\right|^{\frac{\beta_{1}}{r_{1}}-\frac{1}{2}}\cdots&\sum_{\rho_{\lambda}}\left|\gamma_{\lambda}\right|^{\frac{\beta_{\lambda}}{r_{\lambda}}-\frac{1}{2}}
\sum_{\mathbf{f}\in\left(\mathbb{N}^{+}\right)^{\alpha}}
\int_{\left|y\right|>1/N}\left|z\right|^{-k-1-\frac{h-\eta+\ell}{2}+\frac{\mu}{2}}\left|z\right|^{-\tau\left(\bm{\beta},\mathbf{r},\mathfrak{J}_{\lambda}\right)}e^{-\frac{\left\Vert \mathbf{f}\right\Vert ^{2}}{Ny^{2}}}
\\
&\times\exp\left(\sum_{j=1}^{\lambda}\left(\frac{\gamma_{j}}{r_{j}}\arctan\left(Ny\right)-\frac{\pi\left|\gamma_{j}\right|}{2r_{j}}\right)\right)\log^{2\mu}\left(2N\left|y\right|\right)\left|y\right|^{h-\eta-\alpha}\dx y.
\end{align*}
Assume $y>1/N$ and $\gamma_{j}>0,\,j=1,\dots,\lambda$. Since $\arctan\left(\frac{1}{Ny}\right)\gg\frac{1}{Ny}$,
We have 
\begin{align*}
\sum_{\rho_{1}:\,\gamma_{1}>0}\gamma_{1}^{\frac{\beta_{1}}{r_{1}}-\frac{1}{2}}\cdots&\sum_{\rho_{\lambda}:\,\gamma_{\lambda}>0}\gamma_{\lambda}^{\frac{\beta_{\lambda}}{r_{\lambda}}-\frac{1}{2}}
\sum_{\mathbf{f}\in\left(\mathbb{N}^{+}\right)^{\alpha}}
\int_{1/N}^{+\infty}y^{-k-1-\frac{h-\eta+\ell}{2}-\tau\left(\bm{\beta},\mathbf{r},\mathfrak{J}_{\lambda}\right)+\frac{\mu}{2}}e^{-\frac{\left\Vert \mathbf{f}\right\Vert ^{2}}{Ny^{2}}}
\\
&\times\exp\left(-\frac{\tau\left(\bm{\gamma},\mathbf{r},\mathfrak{J}_{\lambda}\right)}{Ny}\right)\log^{2\mu}\left(2Ny\right)y^{h-\eta-\alpha}\dx y.
\end{align*}

Putting $v=\frac{\sum_{j=1}^{\lambda}\frac{\gamma_{j}}{r_{j}}}{Ny},$
we obtain
\begin{align*}
\frac{\sum_{\rho_{1}:\,\gamma_{1}>0}\gamma_{1}^{\frac{\beta_{1}}{r_{1}}-\frac{1}{2}}\cdots\sum_{\rho_{\lambda}:\,\gamma_{\lambda}>0}\gamma_{\lambda}^{\frac{\beta_{\lambda}}{r_{\lambda}}-\frac{1}{2}}}{\left(\tau\left(\bm{\gamma},\mathbf{r},\mathfrak{J}_{\lambda}\right)\right)^{k+\frac{h-\eta+\ell}{2}+\tau\left(\bm{\beta},\mathbf{r},\mathfrak{J}_{\lambda}\right)-h+\eta+\alpha-\frac{\mu}{2}}}
&\sum_{\mathbf{f}\in\left(\mathbb{N}^{+}\right)^{\alpha}}
\int_{0}^{\tau\left(\bm{\gamma},\mathbf{r},\mathfrak{J}_{\lambda}\right)}v^{k-1+\frac{h-\eta+\ell}{2}+\tau\left(\bm{\beta},\mathbf{r},\mathfrak{J}_{\lambda}\right)-\frac{\mu}{2}}
\\
&\times e^{-\frac{\left\Vert \mathbf{f}\right\Vert ^{2}Nv^{2}}{\tau\left(\bm{\beta},\mathbf{r},\mathfrak{J}_{\gamma}\right)^{2}}}\exp\left(-v\right)\log^{2\mu}\left(2\frac{\tau\left(\bm{\gamma},\mathbf{r},\mathfrak{J}_{\lambda}\right)}{v}\right)v^{-h+\eta+\alpha}\dx v.
\end{align*}
Now, from (\ref{eq:stima semplice}) and by elementary manipulations
we can study
\begin{align*}
\frac{\sum_{\rho_{1}:\,\gamma_{1}>0}\gamma_{1}^{-\frac{1}{2}}\cdots\sum_{\rho_{\lambda}:\,\gamma_{\lambda}>0}\gamma_{\lambda}^{-\frac{1}{2}}}{\left(\tau\left(\bm{\gamma},\mathbf{r},\mathfrak{J}_{\lambda}\right)\right)^{k-\frac{h-\eta}{2}+\frac{\ell}{2}+\alpha-\frac{\mu}{2}}}
&\sum_{\mathbf{f}\in\left(\mathbb{N}^{+}\right)^{\alpha}}
\int_{0}^{\tau\left(\bm{\gamma},\mathbf{r},\mathfrak{J}_{\lambda}\right)}v^{k-1-\frac{h-\eta}{2}+\frac{\ell}{2}+\tau\left(\bm{\beta},\mathbf{r},\mathfrak{J}_{\lambda}\right)+\alpha-\frac{\mu}{2}}
\\
&\times e^{-\frac{\left\Vert \mathbf{f}\right\Vert ^{2}Nv^{2}}{\tau\left(\bm{\gamma},\mathbf{r},\mathfrak{J}_{\lambda}\right)^{2}}}\exp\left(-v\right)\log^{2\nu}\left(2\frac{\tau\left(\bm{\gamma},\mathbf{r},\mathfrak{J}_{\lambda}\right)}{v}\right)\dx v
\end{align*}
and so, by Lemma \ref{lem:generalizlemnew}, we have the convergence
if $k>\frac{\lambda}{2}+\frac{h-\eta-\ell+\mu}{2}.$ Since $\mu\leq d-\lambda$
and $0\leq\ell\leq\eta$ we have the complete convergence for all
possible cases if $k>\frac{d+h}{2}.$ 

Now fix $1\leq\xi\leq\lambda$ and assume that $\gamma_{1},\dots,\gamma_{\xi}>0$
and $\gamma_{\xi+1},\dots,\gamma_{\lambda}<0$. In this case, recalling
that $y>1/N,$we have to work with
\begin{align*}
&\sum_{\rho_{1}:\,\gamma_{1}>0}\gamma_{1}^{\frac{\beta_{1}}{r_{1}}-\frac{1}{2}}\cdots\sum_{\rho_{\xi}:\,\gamma_{\xi}>0}\gamma_{\xi}^{\frac{\beta_{\xi}}{r_{\xi}}-\frac{1}{2}}\sum_{\rho_{\xi+1}:\,\gamma_{\xi+1}<0}\left|\gamma_{\xi+1}\right|^{\frac{\beta_{\xi+1}}{r_{\xi+1}}-\frac{1}{2}}\cdots\sum_{\rho_{\lambda}:\,\gamma_{\lambda}<0}\left|\gamma_{\lambda}\right|^{\frac{\beta_{\lambda}}{r_{\lambda}}-\frac{1}{2}}
\\
&\times\sum_{\mathbf{m}\in\left(\mathbb{N}^{+}\right)^{\alpha}}
\int_{1/N}^{+\infty}y^{-k-1-\frac{h-\eta+\ell}{2}-\tau\left(\bm{\beta},\mathbf{r},\mathfrak{J}_{\lambda}\right)+\frac{\mu}{2}}e^{-\frac{\left\Vert \mathbf{m}\right\Vert ^{2}}{Ny^{2}}}
\exp\left(\sum_{j=1}^{\lambda}\left(\frac{\gamma_{j}}{r_{j}}\arctan\left(Ny\right)-\frac{\pi\left|\gamma_{j}\right|}{2r_{j}}\right)\right)\log^{2\mu}\left(2Ny\right)y^{h-\eta-\alpha}\dx y.
\end{align*}
Now, since $y>1/N$, if $\gamma_{j}<0$, we observe that $\frac{\gamma_{j}}{r_{j}}\arctan\left(Ny\right)-\frac{\pi\left|\gamma_{j}\right|}{2r_{j}}\leq-\frac{\pi\left|\gamma_{j}\right|}{2r_{j}}$,
$y^{-\beta_{j}/r_{j}}\leq N^{\beta_{j}/r_{j}}\leq N^{1/r_{j}}$ and
\[
\sum_{\rho_{j}:\,\gamma_{j}<0}\left|\gamma_{j}\right|^{\frac{\beta_{j}}{r_{j}}-\frac{1}{2}}\exp\left(-\frac{\pi\left|\gamma_{j}\right|}{2r_{j}}\right)
\]
trivially converges, so it is enough to consider
\begin{align*}
\sum_{\rho_{1}:\,\gamma_{1}>0}\gamma_{1}^{\frac{\beta_{1}}{r_{1}}-\frac{1}{2}}&\cdots\sum_{\rho_{\xi}:\,\gamma_{\xi}>0}\gamma_{\xi}^{\frac{\beta_{\xi}}{r_{\xi}}-\frac{1}{2}}
\sum_{\mathbf{f}\in\left(\mathbb{N}^{+}\right)^{\alpha}}
\int_{1/N}^{+\infty}y^{-k-1-\frac{h-\eta+\ell}{2}-\tau\left(\bm{\beta},\mathbf{r},\mathfrak{J}_{\xi}\right)+\frac{\mu}{2}}e^{-\frac{\left\Vert \mathbf{f}\right\Vert ^{2}}{Ny^{2}}}
\\
&\times\exp\left(\sum_{j=1}^{\xi}\left(\frac{\gamma_{j}}{r_{j}}\arctan\left(Ny\right)-\frac{\pi\gamma_{j}}{2r_{j}}\right)\right)\log^{2\mu}\left(2Ny\right)y^{h-\eta-\alpha}\dx y
\end{align*}
and so, following the previous case, we have the convergence if $k>\frac{h+d-\lambda+\xi}{2}$
and since $\xi\leq\lambda$, we have the complete convergence if $k>\frac{d+h}{2}$.
If $y<-1/N$ we have the same bounds, by the symmetry of the non-trivial
zeros of the Riemann Zeta function, so we can finally exchange the
integral with the series and get 
\[
I_{2,2}=\frac{\left(-1\right)^{d}}{2\pi i}\sum_{\eta=0}^{h-1}\frac{\dbinom{h}{\eta}}{2^{\eta}}\sum_{\ell=0}^{\eta}\dbinom{\eta}{\ell}\pi^{\frac{h-\eta+\ell}{2}}\left(-1\right)^{\eta-\ell}
 \sum_{\bm{\rho}\in Z^{d}}\frac1{\textbf{r}}\Gamma\left(\frac{\bm{\rho}}{\textbf{r}}\right)
\sum_{\mathbf{f}\in\left(\mathbb{N}^{+}\right)^{h-\eta}}
\int_{\left(1/N\right)}e^{Nz}z^{-k-1-\frac{h-\eta+\ell}{2}-\tau\left(\bm{\rho},\mathbf{r},\mathfrak{D}\right)}e^{-\frac{\pi^{2}\left\Vert \mathbf{f}\right\Vert ^{2}}{z}}\dx z
\]
which is, taking $Nz=v$, 
\[
\frac{\left(-1\right)^{d}}{2\pi i}\sum_{\eta=0}^{h-1}\frac{\dbinom{h}{\eta}}{2^{\eta}}\sum_{\ell=0}^{\eta}\dbinom{\eta}{\ell}\pi^{\frac{h-\eta+\ell}{2}}\left(-1\right)^{\eta-\ell}
 \sum_{\bm{\rho}\in Z^{d}}\frac1{\textbf{r}}\Gamma\left(\frac{\bm{\rho}}{\textbf{r}}\right)
N^{k+\frac{h-\eta+\ell}{2}+\tau\left(\bm{\rho},\mathbf{r},\mathfrak{D}\right)}\sum_{\mathbf{f}\in\left(\mathbb{N}^{+}\right)^{h-\eta}}\int_{\left(1\right)}z^{-k-1-\frac{h-\eta+\ell}{2}-\tau\left(\bm{\rho},\mathbf{r},\mathfrak{D}\right)}e^{v-\frac{\pi^{2}N\left\Vert \mathbf{f}\right\Vert }{v}}\dx v
\]
\[
=\frac{N^{k/2}\left(-1\right)^{d}}{\pi^{k}}\sum_{\eta=0}^{h-1}\frac{\dbinom{h}{\eta}}{2^{\eta}}\sum_{\ell=0}^{\eta}\dbinom{\eta}{\ell}\left(-1\right)^{\eta-\ell}
 \sum_{\bm{\rho}\in Z^{d}}\frac1{\textbf{r}}\Gamma\left(\frac{\bm{\rho}}{\textbf{r}}\right)
\frac{N^{\frac{h-\eta+\ell}{4}+\tau\left(\bm{\rho},\mathbf{r},\mathfrak{D}\right)/2}}{\pi^{\tau\left(\bm{\rho},\mathbf{r},\mathfrak{D}\right)}}
\sum_{\mathbf{f}\in\left(\mathbb{N}^{+}\right)^{h-\eta}}\frac{J_{k+\frac{h-\eta+\ell}{2}+\tau\left(\bm{\rho},\mathbf{r},\mathfrak{D}\right)}\left(2\pi\sqrt{N}\left\Vert \mathbf{f}\right\Vert \right)}{\left\Vert \mathbf{f}\right\Vert ^{k+\frac{h-\eta+\ell}{2}+\tau\left(\bm{\rho},\mathbf{r},\mathfrak{D}\right)}}.
\]

\section{Evaluation of $I_{3}$}

In this section we evaluate 
\[
I_{3}=\frac{1}{2\pi i}\sum_{\underset{{\scriptstyle \left|I\right|\geq1}}{I\subseteq\mathfrak{D}}}\left(-1\right)^{\left|\mathfrak{D}\setminus I\right|}\int_{\left(1/N\right)}e^{Nz}z^{-k-1-\tau\left(\mathbf{r},I\right)} \left(
 \sum_{\bm{\rho}\in Z^{|\mathfrak{D}\setminus I|}}
\frac1{\textbf{r}}\Gamma\left(\frac{\bm{\rho}}{\textbf{r}}\right)
z^{-\tau\left(\bm{\rho},\mathbf{r},\mathfrak{D}\setminus I\right)}
\right)\omega_{2}\left(z\right)^{h}\dx z
\]
which corresponds to $M_5$ in Theorem \ref{main_thm} and has a similar structure to $I_{2}$. If we fix $I\subseteq\mathfrak{D}$
we can repeat the previous argument to justify the exchange the integral
with the series only considering $k+\tau\left(\mathbf{r},I\right)$
instead of $k$ and $\left|\mathfrak{D}\setminus I\right|$ instead
of $d=\left|\mathfrak{D}\right|$. So, we can conclude that all exchanges
are justified if $k>\frac{\left|\mathfrak{D}\setminus I\right|+h}{2}-\tau\left(\mathbf{r},I\right)$,
and so we have
\begin{align*}
I_{3}=&\frac{1}{2\pi i}\sum_{\underset{{\scriptstyle \left|I\right|\geq1}}{I\subseteq\mathfrak{D}}}\left(-1\right)^{\left|\mathfrak{D}\setminus I\right|}\sum_{\eta=0}^{h}\frac{\dbinom{h}{\eta}}{2^{\eta}}\sum_{\ell=0}^{\eta}\dbinom{\eta}{\ell}\left(-1\right)^{\eta-\ell}\pi^{\frac{h-\eta+\ell}{2}}
\\
&\times
 \sum_{\bm{\rho}\in Z^{|\mathfrak{D}\setminus I|}}
\frac1{\textbf{r}}\Gamma\left(\frac{\bm{\rho}}{\textbf{r}}\right)
\int_{\left(1/N\right)}e^{Nz}z^{-k-1-\tau\left(\mathbf{r},I\right)-\frac{h-\eta+\ell}{2}-\tau\left(\bm{\rho},\mathbf{r},\mathfrak{D}\right)}\omega_{2}\left(\frac{\pi^{2}}{z}\right)^{h-\eta}\dx z
\\
=&\frac{1}{2\pi i}\sum_{\underset{{\scriptstyle \left|I\right|\geq1}}{I\subseteq\mathfrak{D}}}\left(-1\right)^{\left|\mathfrak{D}\setminus I\right|}\sum_{\eta=0}^{h}\frac{\dbinom{h}{\eta}}{2^{\eta}}\sum_{\ell=0}^{\eta}\dbinom{\eta}{\ell}\left(-1\right)^{\eta-\ell}\pi^{\frac{h-\eta+\ell}{2}}
\\
&\times
 \sum_{\bm{\rho}\in Z^{|\mathfrak{D}\setminus I|}}
\frac1{\textbf{r}}\Gamma\left(\frac{\bm{\rho}}{\textbf{r}}\right)
\sum_{\mathbf{f}\in\left(\mathbb{N}^{+}\right)^{h-\eta}}\int_{\left(1/N\right)}e^{Nz}z^{-k-1-\tau\left(\mathbf{r},I\right)-\frac{h-\eta+\ell}{2}-\tau\left(\bm{\rho},\mathbf{r},\mathfrak{D}\right)}e^{-\frac{\pi^{2}\left\Vert \mathbf{f}\right\Vert ^{2}}{z}}\dx z
\end{align*}
and so, taking $Nz=v$ and using (\ref{eq:bessel}), we have that
\begin{align*}
I_{3}=&\frac{N^{k}}{2\pi i}\sum_{\underset{{\scriptstyle \left|I\right|\geq1}}{I\subseteq\mathfrak{D}}}N^{\tau\left(\mathbf{r},I\right)}\left(-1\right)^{\left|\mathfrak{D}\setminus I\right|}\sum_{\eta=0}^{h}\frac{\dbinom{h}{\eta}}{2^{\eta}}\sum_{\ell=0}^{\eta}\dbinom{\eta}{\ell}\left(-1\right)^{\eta-\ell}\pi^{\frac{h-\eta+\ell}{2}}
\\
&\times
 \sum_{\bm{\rho}\in Z^{|\mathfrak{D}\setminus I|}}
\frac1{\textbf{r}}\Gamma\left(\frac{\bm{\rho}}{\textbf{r}}\right)
N^{\frac{h-\eta+\ell}{2}-\tau\left(\bm{\rho},\mathbf{r},\mathfrak{D}\setminus I\right)}\sum_{\mathbf{f}\in\left(\mathbb{N}^{+}\right)^{h-\eta}}\int_{\left(1\right)}e^{v}v^{-k-1-\tau\left(\mathbf{r},I\right)-\frac{h-\eta+\ell}{2}-\tau\left(\bm{\rho},\mathbf{r},\mathfrak{D}\right)}e^{-\frac{\pi^{2}N\left\Vert \mathbf{f}\right\Vert ^{2}}{v}}\dx v
\\
=&\frac{N^{k/2}}{\pi^{k}}\sum_{\underset{{\scriptstyle \left|I\right|\geq1}}{I\subseteq\mathfrak{D}}}N^{\tau\left(\mathbf{r},I\right)/2}\left(-1\right)^{\left|\mathfrak{D}\setminus I\right|}\sum_{\eta=0}^{h}\frac{\dbinom{h}{\eta}}{2^{\eta}}\sum_{\ell=0}^{\eta}\dbinom{\eta}{\ell}\left(-1\right)^{\eta-\ell}
\\
&\times
 \sum_{\bm{\rho}\in Z^{|\mathfrak{D}\setminus I|}}
\frac1{\textbf{r}}\Gamma\left(\frac{\bm{\rho}}{\textbf{r}}\right)
\frac{N^{\frac{h-\eta+\ell}{4}+\tau\left(\bm{\rho},\mathbf{r},\mathfrak{D}\setminus I\right)/2}}{\pi^{\tau\left(\bm{\rho},\mathbf{r},\mathfrak{D}\setminus I\right)}}
\sum_{\mathbf{f}\in\left(\mathbb{N}^{+}\right)^{h-\eta}}\frac{J_{k+\tau\left(\mathbf{r},I\right)+\frac{h-\eta+\ell}{2}+\tau\left(\bm{\rho},\mathbf{r},\mathfrak{D}\setminus I\right)}\left(2\pi\sqrt{N}\left\Vert \mathbf{f}\right\Vert \right)}{\left\Vert \mathbf{f}\right\Vert ^{k+\tau\left(\mathbf{r},I\right)+\frac{h-\eta+\ell}{2}+\tau\left(\bm{\rho},\mathbf{r},\mathfrak{D}\setminus I\right)}}
\end{align*}
and this completes the proof.

\bigskip
Marco Cantarini

Dipartimento di Ingegneria Industriale e Scienze Matematiche,

Universit\`a Politecnica delle Marche

Via delle Brecce Bianche, 12 

06131 Ancona, Italy

email (MC): m.cantarini@univpm.it

$\ $

Alessandro Gambini 

Dipartimento di Matematica Guido Castelnuovo,

Sapienza Universit\`a di Roma 

Piazzale Aldo Moro, 5 

00185 Roma, Italy 

email (AG): alessandro.gambini@uniroma1.it

$\ $

Alessandro Zaccagnini 

Dipartimento di Scienze, Matematiche, Fisiche e Informatiche,

Universit\`a di Parma 

Parco Area delle Scienze, 53/a 

43124 Parma, Italy 

email (AZ): alessandro.zaccagnini@unipr.it

\begin{thebibliography}{10}
\bibitem{Aze}W. de Azevedo Pribitkin, \textit{Laplace\textquoteright s
Integral, the Gamma Function, and Beyond}, Amer. Math. Monthly \textbf{109}
(2002), 235--245.

\bibitem{Ber}B. C. Berndt,  \textit{Identities involving the coefficients of a class of Dirichlet series, VIII,}
Trans. Amer. Math. Soc. 201 (1975), 247--261.

\bibitem{BhoHal}G. Bhowmik and K. Halupczok, \textit{Asymptotics
of Goldbach representations}, Various Aspects of Multiple Zeta Functions
--- in honor of Professor Kohji Matsumoto's 60th birthday, 1-{}-21,
Mathematical Society of Japan, Tokyo, Japan, 2020. doi:10.2969/aspm/08410001.
https://projecteuclid.org/euclid.aspm/1590597081.

\bibitem{BruKacPer}J. Br\"udern, J. Kaczorowski, and A. Perelli, \textit{Explicit
formulae for averages of Goldbach representations}, Trans. Amer. Math.
Soc. \textbf{372} (2019), 6981--6999.

\bibitem{Can2}M. Cantarini, \textit{Explicit formulae for averages
of Goldbach numbers, }Ind. Journal of Math. \textbf{61(2)} (2019),
253--279.

\bibitem{Can3}M. Cantarini, \textit{On the Ces\`aro average of the
\textquotedblleft Linnik numbers\textquotedblright }, Acta Arith.
\textbf{180} (2017), 45--62.

\bibitem{Can5}M. Cantarini, \textit{On the Ces\`aro average of the
numbers that can be written as a sum of a prime and two squares of
primes}, J. Number Theory \textbf{185} (2018), 194--217.

\bibitem{Can1}M. Cantarini, \textit{Some Identities Involving the
Ces\`aro Average of the Goldbach Numbers}, Math. Notes \textbf{106}
(2019), 688--702. 

\bibitem{CanGamZac}M. Cantarini, A. Gambini and A. Zaccagnini, \textit{A
note on an average additive problem with prime numbers}, Funct. Approx.
Comment. Math., advance publication, 8 May 2020. doi:10.7169/facm/1856.
https://projecteuclid.org/euclid.facm/1588903264

\bibitem{CanGamZac2}M. Cantarini, A. Gambini and A. Zaccagnini, \textit{On
the average number of representations of an integer as a sum of like
prime powers}, Proc. Amer. Math. Soc.\textbf{ 148(4)} (2020), 1499--1508.

\bibitem{CanGamLangZac}M. Cantarini, A. Gambini, A. Languasco and
A. Zaccagnini, \textit{On an average ternary problem with prime powers},
Ramanujan J (2020). https://doi.org/10.1007/s11139-019-00237-x.

\bibitem{ErdMagObeTric}A. Erd\'elyi, W. Magnus, F. Oberhettinger, and
F. G. Tricomi, \textit{Tables of integral transforms}, Vol. I, McGraw-Hill,
1954.

\bibitem{FreBus}E. Freitag and R. Busam, \textit{Complex analysis},
second ed., Universitext, Springer, Berlin, 2009.

\bibitem{GolYan}D. A. Goldston and L. Yang, \textit{The average number
of Goldbach representations}, Prime Numbers and Representation Theory,
Lecture Series of Modern Number Theory, vol. 2, Science Press, Beijing,
2017, pp. 1--12

\bibitem{Helf3}H. A. Helfgott, \textit{Major arcs for Goldbach\textquoteright s
problem}, Preprint, Available as https://arxiv.org/abs/1305.2897.

\bibitem{Helf1}H. A. Helfgott, \textit{Minor arcs for Goldbach\textquoteright s
problem}, Preprint, Available as https://arxiv.org/abs/1205.5252.

\bibitem{Helf2}H. A. Helfgott, \textit{The Ternary Goldbach Conjecture
is true}, Preprint, Available as https://arxiv.org/abs/1312.7748. 

\bibitem{Languasco2016}
  A. Languasco, \textit{Applications of some exponential sums on prime powers:
               {A} survey},
  Riv. Mat. Univ. Parma \textbf{7}.1 (2016), 19--37.

\bibitem{LanZac4}A. Languasco and A. Zaccagnini, \textit{A Ces\`aro
average for an additive problem with prime powers}, Proceedings of
the conference \textquotedblleft Number Theory Week\textquotedblright ,
Pozna\'{n}, September 4--8, 2017 Edited by \L ukasz Pa\'{n}kowski
and Maciej Radziejewski, Banach Center Publications, Warszawa, Vol.
\textbf{118} (2019), 137--152.

\bibitem{LanZac5}A. Languasco and A. Zaccagnini, \textit{A Ces\`aro
average of generalised Hardy-Littlewood numbers}, Kodai Math. J. \textbf{42(2)}
(2019), 358--375.

\bibitem{LanZac}A. Languasco and A. Zaccagnini, \textit{A Ces\`aro
average of Goldbach numbers}, Forum Mathematicum \textbf{27} (2015),
1945--1960.

\bibitem{LanZac3}A. Languasco and A. Zaccagnini, \textit{A Ces\`aro
Average of Hardy-Littlewood numbers}, J. Math. Anal. Appl. \textbf{401}
(2013), 568--577.

\bibitem{LanZac2}A. Languasco and A. Zaccagnini, \textit{The number
of Goldbach representations of an integer}, Proc. Amer. Math. Soc.
\textbf{140} (2012), 795--804.

\bibitem{Lap}P. S. de Laplace, \textit{Th\'eorie Analytique des Probabilit\'es},
V. Courcier, Paris, 1812

\bibitem{Pin}J. Pintz, \textit{A new explicit formula in the additive
theory of primes with applications, I. The explicit formula for the
Goldbach and Generalized Twin Prime problems}, Available as https://arxiv.org/abs/1804.05561.

\bibitem{Tit}E. C. Titchmarsh, \textit{The Theory of Functions},
second ed., Oxford University Press, Oxford, 1988.

\bibitem{Vin}I. M. Vinogradov, \textit{Some theorems concerning the
theory of primes}, Mat. Sb. N. S. \textbf{2} (1937), 179--195.

\bibitem{Wat}G. N. Watson, \textit{A Treatise on the Theory of Bessel
Functions}, second ed., Cambridge University Press, 1966.
\end{thebibliography}
\end{document}